\documentclass[12pt, reqno]{amsart}
\usepackage{amsmath}
\usepackage{amssymb}
\usepackage{epsfig}
\usepackage{graphicx}
\usepackage{color}
\usepackage{fullpage}
\usepackage{comment}
\definecolor{shadecolor}{gray}{0.875}
\usepackage{amscd}
\usepackage{stmaryrd}
\usepackage{threeparttable}

\usepackage{ulem}
\usepackage{amsthm}
\usepackage{csquotes}
\usepackage{enumitem}
\usepackage{graphics}
\usepackage{tikz-cd}
\usepackage{tabularx}
\usepackage{hhline}
\usepackage{multirow}
\usepackage{makecell}
\usepackage{booktabs}
\usetikzlibrary{backgrounds,calc,positioning}
\usepackage{multicol} 

\usepackage[backend=bibtex,style=alphabetic,maxbibnames=99,url=false]{biblatex}
\usepackage[colorlinks=false,urlbordercolor=white]{hyperref}

\tikzset{sgplattice/.style={inner sep=1pt,norm/.style={red!50!blue},char/.style={blue!50!black},
		lin/.style={black!50}},cnj/.style={black!50,yshift=-2.5pt,left=-1pt of #1,scale=0.5,fill=white}}
	
	\usepackage{here}
	\usepackage{caption}
	\usepackage{subcaption}

\let\oldtocsection=\tocsection

\let\oldtocsubsection=\tocsubsection

\let\oldtocsubsubsection=\tocsubsubsection

\renewcommand{\tocsection}[2]{\hspace{0em}\oldtocsection{#1}{#2}}
\renewcommand{\tocsubsection}[2]{\hspace{1em}\oldtocsubsection{#1}{#2}}
\renewcommand{\tocsubsubsection}[2]{\hspace{2em}\oldtocsubsubsection{#1}{#2}}

\makeatletter

\newcommand{\Rmnum}[1]{\expandafter\@slowromancap\romannumeral #1@}
\makeatother

\numberwithin{equation}{section}

\input xy
\xyoption{all}

\calclayout
\allowdisplaybreaks[3]
\theoremstyle{plain}
\newtheorem{prop}{Proposition}[section]

\newtheorem{lemm}[prop]{Lemma}

\theoremstyle{definition}
\newtheorem{defi}[prop]{Definition}

\newtheorem{prob}[prop]{Problem}

\newtheorem{rema}[prop]{Remark}

\newtheorem{mainthm}{Theorem}

\newlist{steps}{enumerate}{1}
\setlist[steps, 1]{label = Step \arabic*:}

\def\ra{\rightarrow}

\def\cG{{\mathcal G}}
\def\cH{{\mathcal H}}

\def\cK{{\mathcal K}}
\def\cL{{\mathcal L}}
\def\cM{{\mathcal M}}

\def\cO{{\mathcal O}}
\def\cP{{\mathcal P}}

\def\cS{{\mathcal S}}

\def\cV{{\mathcal V}}
\def\cW{{\mathcal W}}

\def\bC{{\mathbb C}}

\def\bF{{\mathbb F}}

\def\bP{{\mathbb P}}
\def\bQ{{\mathbb Q}}
\def\bR{{\mathbb R}}

\def\H{\mathrm{H}}

\def\diag{\mathrm{diag}}

\def\Spec{\mathrm{Spec}}

\def\GL{\mathrm{GL}}
\def\PGL{\mathrm{PGL}}

\def\Gal{\mathrm{Gal}}
\def\Aut{\mathrm{Aut}}

\makeatother
\makeatletter

\author{Tianzhi Yang}
\address{Piazza dei Cavalieri, 7, 56126 Pisa, Italy}
\email{tianzhi.yang@sns.it}

\title{Fields of Moduli of Smooth Del Pezzo Surfaces}

\begin{document}
\date{\today}
\begin{abstract}
The field of moduli of a variety $X$ over an algebraically closed field $K$ is defined as the fixed field of those automorphisms $\sigma$ of $K$ for which $X\simeq X^{\sigma}$. A fundamental question is under what conditions a variety admits a model over its field of moduli. We give a complete answer for smooth del Pezzo surfaces in characteristic $0$: every smooth del Pezzo surface of degree at least $3$ has a model over its field of moduli, whereas in degrees $1$ and $2$ there exist smooth complex del Pezzo surfaces with field of moduli $\bR$ which do not admit a real model.
\end{abstract}

\maketitle
\setcounter{tocdepth}{2}

\section{Introduction}
Let $k$ be a field of characteristic $0$ and $K$ its algebraic closure. Let $X$ be a variety over $K$. Consider the subgroup $\Delta \subset \operatorname{Gal}(K/k)$ consisting of those $\sigma \in \operatorname{Gal}(K/k)$ for which $X^{\sigma} \simeq X$, where $X^{\sigma}$ denotes the Galois conjugate of $X$. Concretely, if $X$ is given by a system of polynomial equations with coefficients in $K$, then $X^{\sigma}$ is obtained by applying $\sigma$ to all those coefficients. One can verify that $\Delta$ is an open subgroup, and therefore corresponds to a fixed field $k_X\subset K$, with $k_X/k$ finite. This field $k_X$ is called the \emph{field of moduli} of $X$.

The notion of field of moduli was first introduced by Matsusaka in \cite{Matsusaka1958}. A key property is that for any intermediate field $K/h/k$, if $X$ admits a model over $h$—that is, if there exists a variety $\mathfrak{X}$ over $h$ with $\mathfrak{X}_K \simeq X$—then we necessarily have $k_X \subset h$. Thus, the field of moduli is contained in every field of definition of $X$. This naturally leads to the following question:

\begin{prob}\label{main-problem}
    Is the field of moduli of $X$ itself a field of definition?
\end{prob}

Foundational contributions to this subject were made by A. Weil \cite{Weil1956}, T. Matsusaka \cite{Matsusaka1958}, and G. Shimura \cite{Shimura1959}, and the problem has since remained an active focus of research; for instance, in the context of curves and abelian varieties, one can consult \cite{Murabayashi1996}, \cite{DebesDouai1997}, \cite{DebesEmsalem1999}, \cite{CardonaQuer2005}, \cite{Huggins2007}, \cite{Kontogeorgis2009}, \cite{Hidalgo2009}, \cite{Marinatto2013}.

For smooth projective algebraic surfaces other than abelian surfaces, the field of moduli problem \ref{main-problem} remains largely undeveloped. $K3$ surfaces form the principal explicitly studied class: Laface computes relative and absolute fields of moduli for singular $K3$ surfaces through transcendental lattices and ring class fields
\cite{Laface2016}, while Valloni develops analogous class field theoretic results and fields of definition for $CM$ $K3$ surfaces \cite{Valloni2021,Valloni2023}; systematic results for other surfaces remain mostly open.

In this paper, we study Problem~\ref{main-problem} for smooth del Pezzo surfaces over $K$. Recall that the degree of a del Pezzo surface $S$ is the integer $d=K_S^2$, with $1\leq d\leq 9$.
\begin{mainthm}
Let $k$ be a field of characteristic $0$ with algebraic closure $K$.
\begin{enumerate}[label={\rm(\arabic*)}]
\item Every smooth del Pezzo surface over $K$ of degree at least $3$ is defined over its field of moduli relative to $K/k$.
\item In each of degrees $1$ and $2$, there exists a smooth del Pezzo surface over $\bC$ whose field of moduli relative to $\bC/\bR$ is $\bR$, but which is not defined over $\bR$.
\end{enumerate}
\end{mainthm}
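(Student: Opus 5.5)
The plan is to use the standard framework of fields of moduli via Galois descent and the Dèbes–Douai / Bresciani–Vistoli approach. Replacing $k$ by $k_X$, we may assume every $\sigma\in\Gal(K/k)$ satisfies $X^\sigma\simeq X$. Then the group $G$ of pairs $(\sigma,\phi)$, with $\phi:X^\sigma\to X$ an isomorphism, is an extension
\[1\to \Aut(X)\to G\to \Gal(K/k)\to 1,\]
and $X$ is defined over $k$ if and only if this extension splits continuously (Weil descent). For del Pezzo surfaces I would linearize this problem with the anticanonical map. When $d\geq 3$, $-K_X$ is very ample, so $X\subset \bP^d$ is embedded $\Aut(X)$-equivariantly, and $\Aut(X)$ acts on $\bP^d$. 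The obstruction class then lives in the gerbe of $X$, which maps to the gerbe of its anticanonical model. For degree $\geq 3$ I would use the criterion (Bresciani–Vistoli) that, if $X$ has a rational point over the residual gerbe, or more generally if the compression $X/\Aut(X)$ has a smooth rational point or a point with suitable stabilizer, then $X$ descends.

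\textbf{Degree $\geq 5$.} These cases are nearly rigid. In degree $9$, $X=\bP^2$. Degree $8$ gives $\bP^1\times\bP^1$ or $\mathbb F_1$. Degrees $7,6,5$ each have a unique isomorphism class, already defined over $\bQ$. Since $X$ is then isomorphic to a base change of a $\bQ$-variety, it is defined over every field, in particular over $k_X$.

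\textbf{Degrees $4$ and $3$.} Degree $4$ surfaces are intersections of two quadrics, determined by a binary form of degree $5$ (the pencil's degeneracy locus) with $\Aut\supset(\mathbb Z/2)^4$. For degree $4$ I would reduce to the field of moduli of the five points in $\bP^1$: an odd number of points in $\bP^1$ is defined over its field of moduli (Bresciani–Vistoli/Marinatto-type results). Then I would reconstruct $X$ from the pencil, checking that the $(\mathbb Z/2)^4$ gerbe splits. For cubics I would use the classification of automorphism groups (Dolgachev) together with the criterion "$\Aut(X)$ acting on $\bP^3$ with a fixed point of odd-degree type". Concretely, if $\Aut(X)\to\PGL_4$ lifts to $\GL_4$ compatibly, descent of the embedding plus an invariant cubic gives a model. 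Equivalently, $H^0(-K_X)$ gives a $G$-representation; because the canonical bundle is Galois-compatible, this splits the Brauer obstruction in $\PGL_{d+1}$. So a $k$-form of $\bP^d$ carries the model. That form is a Severi–Brauer variety whose class must vanish. I would argue it vanishes because $K_X$ descends canonically: the class of $\omega_X^{-1}$ is $G$-equivariant as a line bundle, not merely up to isomorphism. Hence the twisted $\bP^d$ is $\bP(H^0(\omega^{-1}))$, which is trivial. The main obstacle is that this only yields a cocycle in $\PGL$ of the model, not a splitting of $G$. To get from the equivariant embedding to a splitting, I would pick a $G$-stable point or subvariety with trivial stabilizer, using finite-group fixed-point arguments case by case on Dolgachev's list for degrees $3$ and $4$. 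Where generic $X$ has trivial automorphisms, descent is automatic.

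\textbf{Degrees $2$ and $1$.} Here the obstruction should appear via the central involution (Geiser for $d=2$, Bertini for $d=1$). For $d=2$, $X$ is the double cover of $\bP^2$ branched along a quartic $C$. I would choose a plane quartic over $\bC$ with field of moduli $\bR$ relative to $\bC/\bR$ but no real model. Equivalently, I would take $C$ such that the anti-holomorphic isomorphism $\bar C\to C$ has order-$4$ lifts only. Then the Geiser involution prevents splitting: an isomorphism $\bar X\to X$ lifts one for $C$, and a real structure on $X$ would induce one on $C$. So $X$ has field of moduli $\bR$ but no real model. The explicit example would be a quartic with automorphism group forcing the square of any antiholomorphic map to be a nontrivial automorphism. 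For $d=1$ I would use the analogous double cover of the quadric cone branched in a genus-$4$ curve, taking a sextic with the same pathology, e.g. adapted from known plane curves without real models but with real moduli. The hardest part is producing these explicit examples and verifying that no conjugation squares to the identity, which requires computing $\Aut$ precisely.
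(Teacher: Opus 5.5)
Your treatment of $d\geq5$ is fine, but the degree-$3$ part of (1) has no working argument. The semilinear action of $\widetilde G_S$ on $H^0(S,-K_S)^\vee$ gives a rank-$4$ vector bundle $\cV$ on the residue gerbe $\cG_S$, i.e.\ a faithful morphism $\cG_S\to B\GL_4$. It does not give a Severi--Brauer variety over $k_S$: no $k_S$-form of $\bP^3$ exists until $\cG_S$ has a $k_S$-point, and that is what has to be proved. So ``the twisted $\bP^d$ is $\bP(H^0(\omega^{-1}))$, which is trivial'' carries no information, and lifting $\Aut(S)$ to $\GL_4$ is automatic.

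As you concede, everything is then deferred to an unspecified case-by-case fixed-point argument. That argument is the entire content of the cubic case. Also, a $\widetilde G_S$-fixed point has stabilizer all of $\Aut(S)$, not a trivial one. A faithful $4$-dimensional representation does not by itself force neutrality, so the paper manufactures smaller faithful representations or uses group theory:
\begin{itemize}
\item for $2A,4A,8A$, the normal line $N_p$ at the distinguished Eckardt point is a faithful character;
\item for $6E$, the same normal line is faithful on the $C_2$-gerbe obtained as a fiber of the rigidification of $\cG_S$ along the central $C_2$, after neutralizing the $S_3$-quotient;
\item for $2B$, the bundle $q_*\bigl(\cO(-1)|_{\cP}\bigr)$ on the Eckardt pair is a faithful diagonal $C_2^2\subset\GL_2$, which is neutral;
\item for $3D,4B,5A$, one uses that $S_n$ has trivial center and only inner automorphisms;
\item the $3A$/$12A$ locus is reduced, via the intrinsic deck $C_3$, to the branch plane cubic, which descends via its $j$-invariant.
\end{itemize}
In degree $4$ your reduction to Marinatto's theorem is right, but ``reconstruct $X$ from the pencil'' is a missing step. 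You only have a $k_0$-rational quintic divisor $D_0$, and you must produce a smooth pencil over $k_0$ with discriminant $D_0$; this is exactly what neutralizes your $(\mathbb Z/2)^4$-gerbe. The paper does it with the trace forms $\operatorname{Tr}_{A/k_0}(x^2/f')$ and $\operatorname{Tr}_{A/k_0}(tx^2/f')$ on $A=k_0[t]/(f)$.

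Part (2) is an existence statement, so the examples are the proof. In degree $2$ your reduction is the paper's: isomorphisms of branch quartics lift after choosing $\lambda$ with $\lambda^2=c$, and a real model of $S$ gives one of $C$. It then suffices to cite the Artebani--Quispe quartic. However, the Geiser involution is not the obstruction there. Since $u$ has even weight, if $\psi\overline\psi=1$ on $\bP^2$ then any lift satisfies $\varphi\overline\varphi=1$, because the induced scalar on $u$ is a positive real whose square is $1$. The obstruction really comes from $\Aut(C)\cong C_2$.

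In degree $1$ you give no example. Your concrete plan, a genus-$4$ branch curve with field of moduli $\bR$ but no real model, is a different and unexecuted route, and it is not how the Bertini obstruction arises. The missing idea is a quaternionic structure on the anticanonical pencil. Take $F_4,F_6$ general with $F_m(-y,x)=\overline{F_m(x,y)}$. Then $\varphi(x,y,z,w)=(-y,x,z,w)$ is an isomorphism $\overline S\to S$ with
\[
\varphi\overline\varphi=(-x,-y,z,w)=(x,y,z,-w)=\beta
\]
in $\bP(1,1,2,3)$, because $w$ has odd weight. Since $\Aut(S)=\langle\beta\rangle$ is central, both $\varphi$ and $\beta\varphi$ violate Weil's cocycle condition. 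In this example the induced map $\psi$ on $\bP(1,1,2)$ satisfies $\psi\overline\psi=(-x,-y,z)=1$. So the branch curve itself does descend to $\bR$, onto a cone over a pointless conic, and your branch-curve criterion would not detect this obstruction.
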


In particular, if $3\leq d\leq 9$, let $\cM_d$ be the moduli stack of smooth del Pezzo surfaces of degree $d$ over $k$, and let $\cM_d\ra M_d$ be the coarse moduli space. Then the map
\[
\cM_d(h)\ra M_d(h)
\]
is surjective for every field extension $h/k$.

The proof is divided according to the degree. In degrees at least $5$, there are no moduli and the result follows from the standard models over $\bQ$. For degrees $1\leq d\leq4$, we use the gerbe-theoretic formulation of the field-of-moduli obstruction developed by Bresciani and Vistoli~\cite{bresciani-vistoli}. A smooth del Pezzo surface $S$ of degree $d\leq4$ determines a finite residue gerbe $\cG_S$ over its field of moduli, and $S$ descends precisely when $\cG_S$ is neutral.

A del Pezzo surface of degree $4$ is an intersection of two quadrics in $\bP^4$. The five singular members of the pencil of quadrics form a reduced divisor of degree $5$ on $\bP^1$, and determine the surface up to isomorphism. Since a finite subset of odd cardinality of $\bP^1$ is defined over its field of moduli by~\cite{Marinatto2013}, the divisor descends; a trace construction then reconstructs the del Pezzo surface over the same field. For cubic surfaces, we also use the \emph{neutral representation} criteria developed in joint work with Bresciani~\cite{neutral-representation-dim-leq3}, which detect neutralness from low dimensional representations of the geometric inertia group. The cubic argument is based on the classification of automorphisms in~\cite{dolgachev-duncan}, together with the combinatorial structure of the $27$ exceptional lines, with particular emphasis on the Eckardt loci.

In degree two, the anticanonical morphism realizes the surface as a double cover of $\bP^2$ branched over a smooth plane quartic curve. Descending the surface is thus equivalent to descending its branch quartic, and the counterexamples constructed by Artebani and Quispe~\cite{artebani-quispe} show that this is not always possible. Moreover, by \cite[Corollary 5.7]{bresciani-plane}, a degree two del Pezzo surface does descend to its field of moduli provided that its associated quartic curve $C$ has automorphism group $\Aut(C)$ not isomorphic to $C_2$.

In degree one, we construct a new obstruction using the Bertini involution. For a general weighted equation invariant under a suitable semilinear transformation, the composite $\varphi\overline{\varphi}$ of every possible semilinear isomorphism $\varphi$ with its conjugate is the Bertini involution, and hence Weil's cocycle condition cannot be satisfied.

\subsection{Acknowledgement} I am grateful to Giulio Bresciani for many insightful discussions.

\section{Del Pezzo surfaces of degree at least five}
We first dispose of the degrees larger than $4$.

\begin{prop}\label{prop:degree-at-least-five}
Let $S$ be a smooth del Pezzo surface over $K$ of degree at least $5$. Then $S$ is defined over its field of moduli.
\end{prop}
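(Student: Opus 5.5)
The idea is to use the classification of smooth del Pezzo surfaces of degree $d\geq 5$ over the algebraically closed field $K$. For each such $d$ there are only finitely many isomorphism classes, and each class has an explicit model over $\bQ$, hence over $k$. Once $S\simeq \mathfrak{X}_K$ for some variety $\mathfrak{X}$ over $k\subset k_S$, base change to the field of moduli shows that $S$ is defined over $k_S$. Note that $k$ has characteristic $0$, so $\bQ\subset k$.

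First I would recall the classification. A smooth del Pezzo surface $S$ of degree $d$ over $K$ is isomorphic either to $\bP^1\times\bP^1$ (when $d=8$) or to the blow-up of $\bP^2$ in $9-d$ points in general position, meaning no three collinear and no six on a conic. For $d=9$ this is $\bP^2$ itself. For $d=8$ there are two surfaces: $\bP^1\times\bP^1$ and the blow-up of $\bP^2$ at one point. For $d=7$, any two distinct points of $\bP^2$ can be moved to $[1:0:0],[0:1:0]$ by $\PGL_3(K)$. For $d=6$, three non-collinear points can be moved to the three coordinate points. For $d=5$, four points in general position can be moved to $[1:0:0],[0:1:0],[0:0:1],[1:1:1]$. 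This uses that $\PGL_3$ acts transitively on ordered projective frames.

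Next I would exhibit the $\bQ$-models. Take $\bP^2_\bQ$ and $\bP^1_\bQ\times\bP^1_\bQ$, and blow up $\bP^2_\bQ$ at the $\bQ$-rational points listed above. Each such point is a reduced closed subscheme defined over $\bQ$, and blowing up commutes with flat base change. So the blow-up of $\bP^2_\bQ$ along this subscheme, base changed to $K$, is the blow-up of $\bP^2_K$ at the corresponding points, and is therefore isomorphic to $S$.

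The only point needing care is this compatibility of the blow-up with base change and the transitivity statement; neither is a real obstacle. I expect no genuine difficulty, since in degrees $\geq5$ there are no moduli.
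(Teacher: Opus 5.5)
Your proposal is correct and follows essentially the same route as the paper: both use the classification of del Pezzo surfaces of degree $\geq 5$ over $K$ and the fact that each isomorphism class has a model over $\bQ\subset k$. You add a little more detail, namely transitivity of $\PGL_3$ on frames and compatibility of blow-ups with flat base change, where the paper simply invokes the standard models.
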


\begin{proof}
Over an algebraically closed field, a del Pezzo surface of degree $9$ is isomorphic to $\bP^2$; in degree $8$ it is isomorphic to either $\bP^1\times\bP^1$ or $\bF_1$; and in degrees $7$, $6$, and $5$ there is a unique isomorphism class, obtained by blowing up respectively $2$, $3$, and $4$ points in general position in $\bP^2$. Each of these isomorphism classes has a model over $\bQ$. The field of moduli relative to $K/k$ is therefore $k$, and a standard model over $\bQ\subset k$ gives the result.
\end{proof}

\section{Residue gerbes and semilinear automorphisms}
Throughout this section, let $k$ be a field of characteristic $0$ with algebraic closure $K$, and fix an integer $1\leq d\leq4$. Let $\cM_d$ be the moduli stack of smooth del Pezzo surfaces of degree $d$ over $k$. For a smooth del Pezzo surface $S$ of degree $d$ over $K$, let
\[
\cG_S\subset \cM_d
\]
be the residue gerbe of the geometric point $[S]:\Spec K\to \cM_d$. More specifically, over a $k$-scheme $T$, an object $\mathfrak{S}\ra T$ in $\cG_S(T)$ is a smooth proper family of del Pezzo surfaces of degree $d$ over $T$, such that there exists an fppf cover $T'\ra T_K$ with a $T'$-isomorphism $\mathfrak{S}_{T'}\simeq S_{T'}$. Since $d\leq4$, the automorphism group of every geometric object of $\cM_d$ is finite. Thus $\cM_d$ is a Deligne--Mumford stack, and $\cG_S$ is a finite gerbe over the field of moduli $k_S$ of $S$, with geometric inertia group $\Aut(S)$; see \cite[Proposition~3.10]{bresciani-vistoli}. By the formalism, we have
\[
S\text{ descends to }k_S\quad\Longleftrightarrow\quad \cG_S(k_S)\neq \emptyset.
\]

From now on, we base change to $k=k_S$ for simplicity. Let $\Gamma$ be an algebraic group scheme over $k$ and let $G\subset\Gamma(K)$ be a finite subgroup, whose Galois conjugates are $\Gamma(K)$-conjugates. In order to study the neutralness of such a residue gerbe $\cG_S$, we introduce the following definition. 

\begin{defi}[{\cite[Definition~3.4]{neutral-representation-dim-leq3}}]\label{def:neutral}
The embedding $G\hookrightarrow\Gamma(K)$ is \emph{neutral} if every faithful morphism from a finite gerbe to $B\Gamma$ over $k$ whose geometric inertia embedding is conjugate to $G\subset \Gamma(K)$ has a $k$-rational point.
\end{defi}

The main idea of the paper is the following. If some additional structure on $S$ gives a faithful morphism
\[
\cG_S\to B\Gamma
\]
and the resulting geometric inertia embedding $\Aut(S)\hookrightarrow\Gamma(K)$ is neutral, then $\cG_S$ has a $k$-rational point, and hence $S$ descends to its field of moduli.

Put $G_k=\Gal(K/k)$. Since $k=k_S$, for every $\sigma\in G_k$ there is an isomorphism $S^{\sigma}\simeq S$. Define the group of semilinear automorphisms
\[
\widetilde G_S=
\bigl\{(\sigma,\varphi):\sigma\in G_k,
\ \varphi:S^{\sigma}\xrightarrow{\sim}S\bigr\},
\]
where the multiplication is induced by composition of the corresponding semilinear maps. There is an exact sequence
\begin{equation}\label{eq:semilinear-extension}
1\to\Aut_K(S)\to\widetilde G_S
\to G_k\to 1.
\end{equation}
Thus an element of $\widetilde G_S$ lying over $\sigma$ is precisely a $K$-isomorphism $S^{\sigma}\to S$.

In the usual quotient notation, the residue gerbe and its universal surface may be written as
\begin{equation}\label{eq:semilinear-quotients}
\cG_S=[\Spec K/\widetilde G_S],
\qquad
\cS=[S/\widetilde G_S].
\end{equation}
Here the brackets denote the stackification of the Galois descent groupoid; they should not be read as a quotient by a finite algebraic group.

The action of $\widetilde G_S$ on points of $S$ is given by
\[
(\sigma,\varphi)\cdot p = \varphi\bigl(p^{\sigma}\bigr).
\]
Accordingly, a point $p\in S(K)$ is \emph{distinguished} if
\begin{equation}\label{eq:intrinsic-point-galois}
\varphi(p^{\sigma})=p
\end{equation}
for every $\sigma\in G_k$ and every isomorphism $\varphi:S^{\sigma}\xrightarrow{\sim}S$; equivalently, $p$ is fixed by all of $\widetilde G_S$. More generally, a closed subscheme $Z\subset S$ is distinguished if
\[
\varphi(Z^{\sigma})=Z
\]
for every such pair $(\sigma,\varphi)$; see \cite[Section~7]{bresciani-structure}.

If $p$ is distinguished, the map $\Spec K\to S$ defined by $p$ is $\widetilde G_S$-equivariant. Passing to the quotients in~\eqref{eq:semilinear-quotients} gives a section
\begin{equation}\label{eq:distinguished-section}
s_p:\cG_S\longrightarrow\cS
\end{equation}
of the universal surface.

\section{Del Pezzo surfaces of degree four}
We now consider degree $4$. The anticanonical bundle of a smooth del Pezzo surface $S$ of degree $4$ is very ample and realizes $S$ as a complete intersection of two quadrics
\[
S=Q_0\cap Q_1\subset\bP^4_K.
\]
The pencil $\lambda Q_0+\mu Q_1$ contains exactly five singular quadrics. They form a reduced effective divisor
\[
D_S\subset\bP^1_K
\]
of degree $5$. This construction is independent of the choice of generators $Q_0,Q_1$, up to the natural action of $\PGL_2(K)$.

\begin{lemm}\label{lem:degree-four-discriminant}
The isomorphism class of $S$ is determined by the projective isomorphism class of $D_S$. Moreover,
\[
k_S=k_{D_S},
\]
where $k_{D_S}$ is the field of moduli of $D_S$ as a reduced divisor on $\bP^1$.
\end{lemm}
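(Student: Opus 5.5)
We will use the classical simultaneous diagonalization of the pencil. Since $S=Q_0\cap Q_1$ is smooth, the binary quintic $\det(\lambda Q_0+\mu Q_1)$ has five distinct roots $[\lambda_i:\mu_i]$. After a change of coordinates on $\bP^4$ and of generators of the pencil, we may assume $\mu_i\neq0$ for all $i$, and we may put
\[
Q_0=\sum_{i=0}^4 x_i^2,\qquad Q_1=\sum_{i=0}^4 a_i x_i^2,
\]
with the $a_i\in K$ pairwise distinct. In this normal form $D_S=\{-a_0,\dots,-a_4\}$, or, after the obvious reparametrization of the pencil, $\{a_0,\dots,a_4\}$.

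\textbf{Step 1: well-definedness.} We first check that the isomorphism class of $S$ determines the $\PGL_2(K)$-orbit of $D_S$. Any isomorphism $S\simeq S'$ is induced by a projective linear map of the anticanonical embeddings, because $-K_S$ is intrinsic. Such a map sends the pencil of quadrics containing $S$, namely $\bP(H^0(\cI_S(2)))$, isomorphically onto the pencil for $S'$. It preserves degeneracy, so it carries $D_S$ to $D_{S'}$ by a linear isomorphism of $\bP^1$.

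\textbf{Step 2: the converse.} Suppose $g\in\PGL_2(K)$ maps $\{a_i\}$ to $\{b_i\}$. Write $g(t)=(\alpha t+\beta)/(\gamma t+\delta)$ and reorder so that $b_i=g(a_i)$. Replacing the generators $(Q_0,Q_1)$ by $(\gamma Q_1+\delta Q_0,\ \alpha Q_1+\beta Q_0)$ gives the same surface, with quadrics $\sum(\gamma a_i+\delta)x_i^2$ and $\sum(\alpha a_i+\beta)x_i^2$. Rescaling $x_i\mapsto x_i/\sqrt{\gamma a_i+\delta}$ is possible over $K=\bar K$, since $\gamma a_i+\delta\ne0$ when all $b_i$ are finite; we may arrange this by a preliminary coordinate choice. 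After the rescaling the pair becomes $\sum x_i^2$ and $\sum b_i x_i^2$. Hence $S\simeq S'$. This diagonalization and rescaling bookkeeping, including the case where $\infty$ lies in the divisor, is the only point needing care. We handle it by always first moving all points of both divisors off $\infty$ by a common Möbius transformation.

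\textbf{Step 3: fields of moduli.} The construction $S\mapsto D_S$ commutes with Galois conjugation: $D_{S^\sigma}=(D_S)^\sigma$, because the discriminant polynomial of the conjugated pencil is the conjugate polynomial. By Steps 1 and 2,
\[
S^\sigma\simeq S\iff D_S^\sigma\sim D_S\text{ under }\PGL_2(K).
\]
Therefore the subgroups $\Delta\subset\Gal(K/k)$ defining $k_S$ and $k_{D_S}$ coincide, and so $k_S=k_{D_S}$.

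I expect no serious obstacle. The main step is the converse in Step 2, which rests on the standard fact that a smooth intersection of two quadrics is simultaneously diagonalizable.
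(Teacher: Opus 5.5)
Your proposal is correct and follows essentially the same route as the paper: simultaneous diagonalization to the normal form $\sum x_i^2=\sum a_ix_i^2=0$, realizing a M\"obius transformation of the $a_i$ by changing generators of the pencil and rescaling coordinates, and using that $S\mapsto D_S$ commutes with Galois conjugation to identify the two stabilizer subgroups of $\Gal(K/k)$. Your write-up is simply more explicit than the paper's. Your extra care about $\infty$ is not needed: in the normal form both divisors already lie in the affine line, so $\gamma a_i+\delta\neq0$ holds automatically once $g(a_i)=b_i$ is finite.
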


\begin{proof}
Over $K$, two quadratic forms whose common zero locus is smooth may be simultaneously diagonalized. Thus $S$ admits equations
\begin{equation}\label{eq:degree-four-diagonal}
\sum_{i=0}^4x_i^2=0,
\qquad
\sum_{i=0}^4a_ix_i^2=0,
\end{equation}
where the $a_i$ are pairwise distinct; see, for instance,~\cite[Chapter~8]{dolgachev-classical}. The five points $[-a_i:1]$ are precisely the singular members of the pencil. Conversely, a projective transformation carrying the unordered set of the $a_i$ to another such set, followed by rescaling the coordinates, gives an isomorphism between the corresponding intersections of quadrics. Hence the projective isomorphism class of $D_S$ determines $S$.

Every semilinear isomorphism $S^{\sigma}\simeq S$ induces a projective isomorphism $D_S^{\sigma}\simeq D_S$, and the converse follows from the preceding normal form. The two stabilizer subgroups of $\Gal(K/k)$ are therefore equal, proving $k_S=k_{D_S}$.
\end{proof}

\begin{prop}\label{prop:degree-four}
Every smooth del Pezzo surface of degree $4$ over $K$ is defined over its field of moduli.
\end{prop}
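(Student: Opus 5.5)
Reduce to $k=k_S$. By Lemma~\ref{lem:degree-four-discriminant}, $k_S=k_{D_S}$, and $D_S$ is a reduced divisor of odd degree $5$ on $\bP^1_K$. By \cite{Marinatto2013}, a finite subset of $\bP^1$ of odd cardinality is defined over its field of moduli. So there is a reduced degree-$5$ divisor $D\subset\bP^1_k$ with $D_K$ projectively equivalent to $D_S$. After a $k$-rational change of coordinates on $\bP^1_k$ we may assume $\infty\notin D$. Then $D=\{f=0\}$ with $f\in k[t]$ monic, separable, of degree $5$. Put $L=k[t]/(f)$, an étale $k$-algebra of degree $5$, and let $\theta\in L$ be the class of $t$.

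The model is the trace construction. On the $5$-dimensional $k$-vector space $L$, consider the two quadratic forms
\[
q_0(x)=\operatorname{Tr}_{L/k}(x^2),\qquad q_1(x)=\operatorname{Tr}_{L/k}(\theta x^2),
\]
and let $\mathfrak S=\{q_0=q_1=0\}\subset\bP(L)\simeq\bP^4_k$. After base change to $K$ we have $L\otimes_kK\simeq K^5$ via the five embeddings $\theta\mapsto a_i$, where the $a_i$ are the roots of $f$. In these coordinates $q_0=\sum x_i^2$ and $q_1=\sum a_ix_i^2$, with the $a_i$ pairwise distinct. So $\mathfrak S_K$ is exactly the diagonal form~\eqref{eq:degree-four-diagonal}, and it is a smooth intersection of two quadrics. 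Its singular pencil members are $[-a_i:1]$, so its discriminant divisor is the image of $D_K$ under the $\bQ$-rational automorphism $t\mapsto -t$. Replacing $\theta$ by $-\theta$ at the start removes this sign. Hence $D_{\mathfrak S_K}$ is projectively equivalent to $D_S$.

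By the first assertion of Lemma~\ref{lem:degree-four-discriminant}, the projective class of the discriminant determines the surface, so $\mathfrak S_K\simeq S$. Thus $S$ is defined over $k_S$.

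The main obstacle is checking that the cited result on odd-degree subsets gives a descended divisor, and not merely a model up to a twist of $\bP^1$. An odd-degree $0$-cycle on a Brauer--Severi curve forces it to be $\bP^1$, so the divisor really lives on $\bP^1_k$; I would make this explicit if the reference does not. A second point to verify is that the diagonalization over $K$ via $L\otimes K\simeq K^5$ is compatible with the uniqueness statement of the lemma. This holds because that statement only uses the unordered set $\{a_i\}$ up to $\PGL_2$, and any rescaling of the coordinates is absorbed over $K$.
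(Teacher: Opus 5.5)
Your proposal is correct and follows essentially the paper's argument: descend the degree-$5$ discriminant divisor using \cite{Marinatto2013}, then rebuild the surface from trace forms on $k[t]/(f)$. The differences are cosmetic: you omit the paper's twist by $1/f'(t)$, which is harmless because any unit twist only rescales coordinates over $K$, and you make explicit two points the paper leaves implicit, namely the sign $t\mapsto -t$ and the fact that an odd-degree divisor forces the Brauer--Severi curve to be $\bP^1_k$.
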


\begin{proof}
Set $k_0=k_S=k_{D_S}$ by Lemma~\ref{lem:degree-four-discriminant}. By~\cite[Theorem~1.1]{Marinatto2013}, a finite subset of odd cardinality of $\bP^1_K$ is defined over its field of moduli. We may therefore choose a reduced divisor
\[
D_0\subset\bP^1_{k_0}
\]
of degree $5$ whose base change to $K$ is projectively isomorphic to $D_S$.

Since $k_0$ is infinite, after a change of coordinates we may assume that the point at infinity does not belong to $D_0$. Choose a separable polynomial $f(t)\in k_0[t]$ of degree $5$ whose zero scheme is $D_0$, and put
\[
A=k_0[t]/(f).
\]
Since $f$ is separable, $f'(t)$ is invertible in $A$. Define quadratic forms on the five dimensional $k_0$-vector space $A$ by
\[
q_0(x)=\operatorname{Tr}_{A/k_0}\left(\frac{x^2}{f'(t)}\right),
\qquad
q_1(x)=\operatorname{Tr}_{A/k_0}\left(\frac{tx^2}{f'(t)}\right).
\]
Let
\[
S_0=\{q_0=q_1=0\}\subset\bP(A).
\]
If $\alpha_0,\ldots,\alpha_4$ are the roots of $f$ in $K$, then after extending scalars to $K$ the two forms become
\[
\sum_{i=0}^4\frac{x_i^2}{f'(\alpha_i)},
\qquad
\sum_{i=0}^4\frac{\alpha_ix_i^2}{f'(\alpha_i)}.
\]
After rescaling the coordinates, these are of the form~\eqref{eq:degree-four-diagonal}. In particular, $S_0$ is smooth, its discriminant divisor is $D_0$, and $(S_0)_K\simeq S$ by Lemma~\ref{lem:degree-four-discriminant}.
\end{proof}

\section{Cubic surfaces: exceptional lines}
\subsection{The anticanonical embedding and its normal line}
For a smooth cubic surface $S$, the anticanonical bundle is intrinsically defined and very ample. Set
\[
V_S=\H^0(S,-K_S)^\vee.
\]
Then $\dim V_S=4$, and the anticanonical embedding is
\[
S\hookrightarrow\bP(V_S)\simeq\bP^3_K.
\]
Let $\varphi: S^{\sigma}\xrightarrow{\sim} S$ be a $\sigma$-semilinear isomorphism, where $\sigma \in \Gal(K/k_S)$. Because
\[
\varphi^*K_S\simeq K_{S^{\sigma}},
\]
the morphism $\varphi$ induces a $\sigma$-semilinear isomorphism $\bP^{3,\sigma}_K\to \bP^3_K$, making the following diagram commute. 
\begin{center}
    \begin{tikzcd}
S^{\sigma} \arrow[r] \arrow[d, hook] & S \arrow[d, hook] \\
{\bP^{3,\sigma}_K} \arrow[r]         & \bP^3_K          
\end{tikzcd}
\end{center}
Consequently, $\varphi$ carries any line on $S^{\sigma}$ to a line on $S$.

Over $\cG_S$, using the anticanonical divisor of the universal surface $\cS\ra \cG_S$, the space $V_S$ forms a rank $4$ vector bundle $\cV$ over $\cG_S$, so that the universal surface is embedded as a relative cubic hypersurface $\cS\hookrightarrow\bP(\cV)$.

Let $p\in S(K)$ be distinguished in the sense of~\eqref{eq:intrinsic-point-galois}. The section $s_p$ of~\eqref{eq:distinguished-section} allows us to define the normal line
\[
N_p:=s_p^*N_{\cS/\bP(\cV)},
\]
which is a line bundle on $\cG_S$. Indeed, since $\cS$ is a smooth relative cubic surface in $\bP(\cV)$, restriction of the relative tangent sequence along $s_p$ gives an exact sequence of vector bundles on $\cG_S$,
\[
0\to s_p^*T_{\cS/\cG_S}
\to s_p^*T_{\bP(\cV)/\cG_S}
\to N_p\to0.
\]
The first two terms have ranks $2$ and $3$, so the quotient $N_p$ has rank $1$. On an object $T\to\cG_S$, represented by a family $S_T\subset\bP^3_T$ together with the induced section $p_T$, its fiber is
\[
(N_p)_T=T_{\bP^3_T/T,p_T}/T_{S_T/T,p_T}.
\]
Thus $N_p$ is exactly the one dimensional normal direction to the surface at the distinguished point. 

\begin{lemm}\label{lem:normal-character}
Let $g\in \Aut(S)$ be represented in anticanonical coordinates by a diagonal matrix with eigenvalues $a_0,\ldots,a_3$. Suppose $p=[e_j]$ and the tangent plane $T_{S,p}$ is given by $x_m=0$. Then the character of $g$ on $N_p$ is
\[
\frac{a_m}{a_j}.
\]
\end{lemm}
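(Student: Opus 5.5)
The plan is a direct local computation in affine coordinates around $p$, using the fiber description $(N_p)_T=T_{\bP^3_T/T,p_T}/T_{S_T/T,p_T}$ from the preceding discussion. First I note that $g$ fixes $p=[e_j]$, since $g e_j=a_je_j$. Hence $g$ acts on the tangent space $T_{\bP^3,p}$. It preserves $T_{S,p}$ because $g$ is an automorphism of $S$. So $g$ acts on the quotient line $N_p$ by some scalar, and the task is to identify that scalar.

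I would use the standard description of the tangent space of projective space at a point:
\[
T_{\bP(V),[v]}\simeq \operatorname{Hom}(Kv,\,V/Kv),
\]
applied to $v=e_j$ with $V=V_S$. Under this identification, a linear map $g$ with $ge_j=a_je_j$ acts by sending $\psi$ to $\bar g\circ\psi\circ g^{-1}|_{Ke_j}$. The induced map $\bar g$ on $V/Ke_j$ is diagonal with eigenvalues $a_i$ for $i\neq j$. Thus the eigenvalue on the tangent direction along $e_i$ is $a_i/a_j$.

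Equivalently, and more concretely, I would work in the affine chart $x_j\neq0$ with coordinates $y_i=x_i/x_j$ for $i\neq j$. There $g$ acts by $y_i\mapsto (a_i/a_j)y_i$, and $p$ is the origin. The tangent plane $x_m=0$ becomes $y_m=0$. So $T_{S,p}$ is spanned by the $\partial/\partial y_i$ with $i\neq j,m$, and the quotient $N_p$ is spanned by the class of $\partial/\partial y_m$. Since $g$ scales $y_m$ by $a_m/a_j$, it scales $\partial/\partial y_m$ by the same factor under pushforward. This gives the character $a_m/a_j$.

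The only subtle point is the convention: whether the character is taken on $N_p$ or on its dual, and whether $g$ acts by pushforward or by pullback. I would fix the convention that $g$ acts on $N_p$ through its differential, which matches the fiber description of $N_p$ as a quotient of tangent spaces. I would also remark that the matrix is only defined up to a scalar, but the ratio $a_m/a_j$ is independent of that choice, so the character is well defined. Finally, $m\neq j$, since $p$ lies on its own tangent plane $x_m=0$.
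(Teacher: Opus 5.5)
Your proposal is correct and follows the paper's argument: the paper's proof likewise reads off the eigenvalue $a_r/a_j$ of $g$ on each coordinate direction of the tangent space of $\bP^3$ at $[e_j]$, and identifies $N_p$ with the $x_m$-direction modulo $T_{S,p}=\{x_m=0\}$. Your affine-chart computation, the $\operatorname{Hom}(Ke_j,V/Ke_j)$ description, and the remarks on conventions and scalar ambiguity are faithful elaborations of that argument.
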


\begin{proof}
On the affine tangent space to $\bP^3$ at $[e_j]$, the coordinate direction $x_r$ has eigenvalue $a_r/a_j$. The normal line is the quotient of the ambient tangent space by $T_{S,p}$, hence is represented by the $x_m$-direction.
\end{proof}

\subsection{The 27 lines and the Weyl group}
The Hilbert scheme of lines on $S$ is a reduced finite scheme
\[
L_S=F_1(S)
\]
of length $27$, equipped with its incidence relation. The automorphism group of the standard incidence configuration is the Weyl group $W(E_6)$ of order $51840$, and $\Aut(S)$ can be embedded into $W(E_6)$ as a finite subgroup; see~\cite[\S~8.3]{beauville-cubic-surfaces} and \cite{dolgachev-duncan}.

In this paper an \emph{exceptional line} means one of these $27$ lines in the anticanonical model. Equivalently, it is a $(-1)$-curve $E\subset S$, characterized intrinsically by
\[
E^2=-1,\quad (-K_S)\mathbin{\cdot}E=1.
\]

\subsection{Eckardt configurations}
A \emph{projective reflection} of $\bP(V_S)$ is the projective transformation induced by an order-two linear transformation of $V_S$ that fixes a hyperplane pointwise; the remaining eigenline determines its center; an automorphism of $S$ is a \emph{reflection} if it is the restriction of a projective reflection on $S$.

An Eckardt point is a point through which three exceptional lines on $S$ pass. Equivalently, the tangent-plane section (not the tangent space itself) is
\[
T_{S,p}\cap S=\ell_1\cup\ell_2\cup\ell_3,
\]
with all three lines meeting at $p$; see~\cite[Section~1.2]{dolgachev-duncan}. In characteristic zero, Eckardt points are in bijection with reflections of $S$: the Eckardt point $p$ is the center of the corresponding projective reflection~\cite[Section~1.2]{dolgachev-duncan}.

Let $E(S)$ be the finite set of Eckardt points. There are two geometrically different ways in which several Eckardt points may be collinear.
\begin{itemize}
\item An \emph{exceptional line of the Eckardt configuration} is an exceptional line $\ell\subset S$ containing two Eckardt points. In characteristic zero such a line contains at most two Eckardt points. If $p,q\in E(S)$ lie on $\ell$, then the reflections centered at $p$ and $q$ commute; their product is an involution of class $2B$.
\item A \emph{trihedral line} is a line $m\subset\bP(V_S)$ that is not contained in $S$ and contains three Eckardt points. Since $m\not\subset S$, B\'ezout's theorem gives a zero-dimensional intersection $m\cap S$ of length $3$. Thus, once two distinct Eckardt points $p,q$ on $m$ are known, the third intersection point is forced. In the configurations below this third point is again an Eckardt point. The three corresponding reflections generate a copy of $S_3$ and permute the three points. This is the geometry underlying configuration $\mathsf{Eck}_3$.
\end{itemize}
Following~\cite[Section~13.1]{dolgachev-duncan}, the \emph{Eckardt configuration} consists not only of the finite set $E(S)$, but also of the two incidence relations just described: which pairs of Eckardt points lie on an exceptional line of $S$, and which triples lie on a trihedral line not contained in $S$. All parts of this structure are intrinsic in the following sense: the anticanonical embedding, the scheme of exceptional lines, projective collinearity, and containment in $S$ are preserved by every Galois-semilinear isomorphism.

It is useful to keep several examples in mind. In $\mathsf{Eck}_2$, the two Eckardt points are joined by an exceptional line. In $\mathsf{Eck}_3$, all three points lie on one trihedral line. In $\mathsf{Eck}_4$, there is a distinguished point $t$ joined to each of the other three points by exceptional lines, while those other three points lie on a trihedral line. In $\mathsf{Eck}_6$, the six points may be labeled by the six edges of a tetrahedron: the three edges incident with a vertex form a trihedral triple, while opposite edges form an exceptional pair. In $\mathsf{Eck}_9$, the nine Eckardt points and twelve trihedral lines form the Hesse configuration $(9_4,12_3)$: every trihedral line contains three points and every point lies on four such lines.

To avoid confusion with cyclic groups, we write $\mathsf{Eck}_n$ for the Eckardt configuration having $n$ points, different from the notation as in \cite{dolgachev-duncan}. The strata notation is that of~\cite[Table~5]{dolgachev-duncan}, while the full automorphism groups are listed in~\cite[Table~1]{dolgachev-duncan}.

\begin{center}
\small
\setlength{\tabcolsep}{3pt}
\begin{tabularx}{\textwidth}{c c c >{\raggedright\arraybackslash}X >{\raggedright\arraybackslash}X}
\toprule
configuration&$|E(S)|$&reflections&full automorphisms&stratum/strata\\
\midrule
$\mathsf{Eck}_0$&0&$1$&$1$&$1A$\\
$\mathsf{Eck}_1$&1&$C_2$&$C_2,\ C_4,\ C_8$&$2A,4A,8A$\\
$\mathsf{Eck}_2$&2&$C_2^2$&$C_2^2$&$2B$\\
$\mathsf{Eck}_3$&3&$S_3$&$S_3$&$3D$\\
$\mathsf{Eck}_4$&4&$S_3\times C_2$&$S_3\times C_2$&$6E$\\
$\mathsf{Eck}_6$&6&$S_4$&$S_4$&$4B$\\
$\mathsf{Eck}_9$&9&$H_3(3)\rtimes C_2$&$H_3(3)\rtimes C_2,\ H_3(3)\rtimes C_4$&$3A$ locus; special point $12A$\\
$\mathsf{Eck}_{10}$&10&$S_5$&$S_5$&$5A$\\
$\mathsf{Eck}_{18}$&18&$C_3^3\rtimes S_4$&$C_3^3\rtimes S_4$&$3C$\\
\bottomrule
\end{tabularx}
\end{center}
where $H_3(3)$ is the Heisenberg group of order $27$.

\section{Cubic surfaces: descent from Eckardt configurations}
In this section, we prove Theorem~A in degree $3$.
\subsection{No Eckardt points}
\begin{prop}\label{prop:1A}
If $S$ has automorphism type $1A$, then $S$ descends to $k_S$.
\end{prop}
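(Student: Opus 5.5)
If $S$ has type $1A$, then $\Aut(S)$ is trivial, and I will use this directly. By the setup of Section~3, the residue gerbe $\cG_S$ is a finite gerbe over $k_S$ whose geometric inertia group is $\Aut(S)=1$. A gerbe with trivial inertia is a one-element gerbe, so it is isomorphic to $\Spec k_S$. In particular $\cG_S(k_S)\neq\emptyset$, and by the descent criterion $S$ descends to $k_S$. The plan is therefore to make this observation rigorous, and also to give an elementary Weil-descent version as a cross-check.

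For the gerbe argument, I would use the general fact that an fppf (here étale) gerbe over a field whose band is trivial is neutral. Its automorphism sheaves are trivial, so it is equivalent to the stack associated to its coarse space. By construction in \cite{bresciani-vistoli}, that coarse space is $\Spec k_S$. So the structure map $\cG_S\to\Spec k_S$ is an isomorphism, and its inverse is a $k_S$-point.

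As a concrete alternative, I would argue with the extension~\eqref{eq:semilinear-extension}. Since $\Aut_K(S)=1$, the map $\widetilde G_S\to G_k$ is an isomorphism. Hence for each $\sigma$ there is a unique isomorphism $\varphi_\sigma:S^\sigma\to S$. By uniqueness, the cocycle condition $\varphi_{\sigma\tau}=\varphi_\sigma\circ\varphi_\tau^\sigma$ holds automatically. Two points need checking:
\begin{itemize}
\item Continuity: the $\varphi_\sigma$ are defined over a finite extension, since $S$ and one choice of isomorphism are. They are therefore locally constant in $\sigma$.
\item Effectivity of descent: $S$ is projective with the Galois-stable ample class $-K_S$, and $\varphi_\sigma$ preserves this class.
\end{itemize}
Weil's descent theorem then produces a model over $k_S$. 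Concretely, the cocycle induces a semilinear action on $V_S$, which descends by Hilbert~90. The cubic equation in $\bP(V_S)$ then descends as well.

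No step is a real obstacle. The only point needing care is the effectivity of descent, namely the quasi-projectivity argument, and this is handled by the anticanonical embedding already discussed in Section~5.
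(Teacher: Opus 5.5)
Your proposal is correct, and its main argument is exactly the paper's: the geometric inertia $\Aut(S)$ is trivial, so $\cG_S\simeq\Spec k_S$ and therefore has a $k_S$-point. Your Weil-descent cross-check is also sound but not needed: the isomorphisms $\varphi_\sigma$ are unique and so automatically form a continuous cocycle, which descends via the canonical semilinear action on $V_S$.
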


\begin{proof}
The inertia of $\cG_S$ is trivial, so $\cG_S\simeq\Spec k_S$.
\end{proof}

\subsection{A unique Eckardt point}
Assume that $S$ has configuration $\mathsf{Eck}_1$, with unique Eckardt point $p$. The Eckardt condition is preserved by base change and by every isomorphism $S^{\sigma}\xrightarrow{\sim}S$. Uniqueness therefore implies that $p^{\sigma}$ is carried to $p$, so $p$ is intrinsically distinguished in the sense of~\eqref{eq:intrinsic-point-galois}. The normal bundle $N_p$ over $\cG_S$ is consequently defined.

\begin{prop}\label{prop:C1}
If $S$ has automorphism type $2A$, $4A$, or $8A$, then $S$ descends to $k_S$.
\end{prop}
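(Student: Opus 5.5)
The plan is to use the distinguished Eckardt point $p$ and its normal line bundle $N_p$ on $\cG_S$. Since $N_p$ is a line bundle, it defines a morphism $\cG_S\to B\mathbb{G}_m$. If this morphism is faithful, then the geometric inertia $\Aut(S)\in\{C_2,C_4,C_8\}$ embeds into $\mathbb{G}_m(K)=K^\times$ as a group of roots of unity. I would then show that any finite cyclic subgroup $\mu_n\subset\mathbb{G}_m(K)$ is neutral in the sense of Definition~\ref{def:neutral}. This is presumably one of the basic criteria of \cite{neutral-representation-dim-leq3} for one-dimensional representations.

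The direct argument is as follows. A faithful morphism $\cG\to B\mathbb{G}_m$ from a finite gerbe $\cG$ is a line bundle whose associated $\mathbb{G}_m$-torsor $P\to\cG$ is an algebraic space, because faithfulness means trivial stabilizers. Moreover, $\cG\simeq[P/\mathbb{G}_m]$, and $P$ is a $\mathbb{G}_m$-torsor over $\cG$. Since $\cG$ is finite with band of order $n$, the $n$-th tensor power of the line bundle has trivial inertia action, so it descends to a line bundle on the residue field $k$, which is trivial. Choosing a trivialization yields $\mu_n$-structure, and $P/\mu_n$ is a $\mathbb{G}_m/\mu_n\simeq\mathbb{G}_m$-torsor over $k$, which is trivial by Hilbert~90. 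Hence $\cG$ is a $\mu_n$-gerbe with a faithful structure coming from an actual $\mu_n$-torsor, so $\cG\simeq B\mu_n$ is neutral. Equivalently, $\cG$ is the gerbe of $n$-th roots of a line bundle on $\Spec k$, which is trivial.

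It remains to check faithfulness, that is, that $\Aut(S)$ acts faithfully on the normal line $N_p$. For this I would use Lemma~\ref{lem:normal-character} together with the normal forms of \cite{dolgachev-duncan} in the three cases.

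\begin{itemize}
\item For $2A$, the generator is the reflection with center $p=[e_j]$, acting by $-1$ on $x_j$ and trivially on the tangent hyperplane $x_m=0$ direction's complement. Its character $a_m/a_j=-1$ is nontrivial.
\item For $4A$ and $8A$, $S$ has an equation such as $x_3^2x_2+x_2^3+x_0^3+x_0x_1^2$ type forms, or respectively $x_3^2x_2+x_2^2x_1+x_0^3+x_0x_1^2$. The generator is diagonal, $p$ is a coordinate point, and $T_{S,p}$ is a coordinate hyperplane. Applying Lemma~\ref{lem:normal-character} gives primitive $4$th, resp. $8$th, roots of unity.
\end{itemize}

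Since every subgroup of a cyclic group is detected by its generator, the action is faithful.

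The main obstacle is this last verification. One must pin down the correct normal forms and identify $p$ and $x_m$ in them. Alternatively, one can argue conceptually. An automorphism fixing $p$ acts on $T_{\bP^3,p}$. If it acted trivially on $N_p$, then a power of it acting nontrivially would still have to be compatible with the Eckardt tangent section $\ell_1\cup\ell_2\cup\ell_3$. Linearizing at a fixed point, a nontrivial element of order $2$ in $C_{2^r}$ is the reflection, whose normal character is $-1$. Because the group is cyclic of $2$-power order, its unique involution acting nontrivially on $N_p$ already forces faithfulness. This reduces the whole check to the $2A$ computation, and I would use that shortcut.
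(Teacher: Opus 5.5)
Your argument is correct, and its skeleton matches the paper's proof. The unique, hence distinguished, Eckardt point $p$ gives the line bundle $N_p$ on $\cG_S$. This is a faithful one-dimensional representation of $\Aut(S)$, and faithful one-dimensional representations are neutral.

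Where you differ is the faithfulness check. The paper computes the character of a generator case by case from the Dolgachev--Duncan normal forms, obtaining $-1$, $i$ and $\epsilon^5$ for $2A$, $4A$, $8A$. Your shortcut is uniform and needs no normal forms:
\begin{enumerate}
\item The kernel of $\Aut(S)\to K^\times$ is a subgroup of the cyclic $2$-group $\Aut(S)\simeq C_{2^r}$, $r\le 3$. If nontrivial, it contains the unique involution.
\item That involution is the reflection centered at $p$, since the reflection lies in $\Aut(S)$ and has order $2$.
\item In coordinates where this reflection is $\diag(1,1,1,-1)$ and $p=[e_3]$, it acts by $-1$ on all of $T_{\bP^3,p}$, hence on $N_p$.
\end{enumerate}
This uses only the table and the definition of the center of a reflection. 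The paper's route gives more information, namely the explicit characters, but it depends on the cited normal forms and generators being right.

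You should therefore drop your bullet list for $4A$/$8A$. The $4A$ equation you wrote is in fact not a $4A$ normal form: it is a sum of binary cubics in $(x_0,x_1)$ and $(x_2,x_3)$, and it admits the order-$3$ automorphism $\diag(1,1,\omega,\omega)$ with $\omega$ a primitive cube root of unity. Also drop the sentence about compatibility with the Eckardt tangent section, which is not an argument, and state the shortcut cleanly.

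Your direct proof of neutrality is a self-contained version of the result the paper simply cites as \cite[Proposition~5.1]{neutral-representation-dim-leq3}. There, $N_p^{\otimes n}$ has trivial inertia action and so descends to $\Spec k_S$, where it is trivial. Hence $\cG_S$ is the gerbe of $n$-th roots of $\cO$, i.e.\ $B\mu_n$.
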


\begin{proof}
We compute the character on $N_p$ using the normal forms of~\cite[Lemmas~11.4 and~12.12]{dolgachev-duncan}.

For type $2A$, a generator is represented by
\[
\diag(1,1,1,-1),\quad p=[0:0:0:1].
\]
All three ambient tangent eigenvalues at $p$ are $-1$, so the normal character is $-1$.

For type $4A$, one may use the normal form equation
\[
x_3^2x_2+x_2^2x_0+x_1(x_1-x_0)(x_1-cx_0)=0
\]
with generator
\[
g=\diag(1,1,-1,i),\qquad p=[0:0:0:1].
\]
The tangent plane at $p$ is $x_2=0$. By Lemma~\ref{lem:normal-character}, the character on $N_p$ is
\[
\frac{-1}{i}=i,
\]
which has order $4$.

For type $8A$, the surface has the normal form
\[
x_0^3+x_0x_3^2-x_1x_2^2+x_1^2x_3=0,
\]
which is defined over $\bQ\subset k$. We can still study its reflections. A generator is
\[
g=\diag(1,\epsilon^6,\epsilon,\epsilon^4),
\]
where $\epsilon$ is a primitive eighth root of unity. The unique Eckardt point is
\[
p=[0:0:1:0],
\]
and the tangent plane is $x_1=0$. Hence the normal character is
\[
\frac{\epsilon^6}{\epsilon}=\epsilon^5,
\]
which has order $8$.

Thus $N_p$ is a faithful line bundle in all three cases. Every faithful one-dimensional representation is neutral by~\cite[Proposition~5.1]{neutral-representation-dim-leq3}; hence $\cG_S$ is neutral and $S$ descends to $k_S$.
\end{proof}

\subsection{Two Eckardt points}
Assume that the Eckardt configuration is $\mathsf{Eck}_2$, and let $P=\{p_1,p_2\}\subset S$ be the reduced geometric Eckardt locus. For every $\sigma\in G_k$ and every isomorphism $\varphi:S^{\sigma}\xrightarrow{\sim}S$, the map $\varphi$ preserves the Eckardt condition and therefore
\begin{equation}\label{eq:eck2-semilinear-invariance}
\varphi(P^{\sigma})=P.
\end{equation}
Thus the unordered pair, although its individual points need not be Galois fixed, is invariant under the entire semilinear group $\widetilde G_S$. It consequently defines a finite \'etale morphism of degree $2$
\[
q:\cP=[P/\widetilde G_S]\longrightarrow
\cG_S=[\Spec K/\widetilde G_S].
\]

\begin{prop}\label{prop:2B}
If $S$ has automorphism type $2B$, so that $\Aut(S)\simeq C_2^2$, then $S$ descends to $k_S$.
\end{prop}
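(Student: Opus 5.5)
The plan is to use the finite étale double cover $q:\cP\to\cG_S$ defined by the Eckardt pair $P$, together with line bundles built from the distinguished points, as in Proposition~\ref{prop:C1}.

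\textbf{Step 1: normal form.} I first fix coordinates on $V_S$ adapted to the group. By the table, $\Aut(S)\simeq C_2^2$ is generated by the two reflections $r_1,r_2$ centered at $p_1,p_2$. The exceptional line $\ell=\overline{p_1p_2}$ contains both Eckardt points. The reflections commute and $r_2$ sends Eckardt points to Eckardt points while fixing $p_2$. Hence $r_2(p_1)=p_1$, so every element of $\Aut(S)$ fixes each $p_i$. Using the normal form of~\cite{dolgachev-duncan} for type $2B$, I take
\[
r_1=\diag(-1,1,1,1),\quad r_2=\diag(1,-1,1,1),\quad p_1=[e_0],\quad p_2=[e_1],
\]
so that $\ell=\bP(\langle e_0,e_1\rangle)$. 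Then I apply Lemma~\ref{lem:normal-character} to the tangent plane $x_m=0$ at $p_1$. Since $T_{S,p_1}\supset\ell$, we have $m\in\{2,3\}$. So $r_1$ acts on $N_{p_1}$ by $-1$ and $r_2$ acts trivially; symmetrically for $N_{p_2}$.

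\textbf{Step 2: neutralize over the double cover.} Because $\Aut(S)$ fixes $p_1$ and $p_2$, the stack $\cP$ is a finite gerbe over a field $k'$ with $[k':k]\le 2$, and it still has geometric inertia $C_2^2$. Over $\cP$ the pair is ordered, so there are two tautological sections $s_{p_1},s_{p_2}$ of $\cS\times_{\cG_S}\cP$. These give line bundles $N_{p_1},N_{p_2}$ on $\cP$ with distinct characters whose product character is faithful. The resulting morphism $\cP\to B\mathbb{G}_m^2$ is therefore faithful. I will argue that such a morphism to the classifying stack of a split torus is neutral, either by Hilbert~90 or by applying~\cite[Proposition~5.1]{neutral-representation-dim-leq3} twice. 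Hence $\cP(k')\neq\emptyset$. If $k'=k$, i.e.\ $\cP\to\cG_S$ has a section, this already gives $\cG_S(k)\ne\emptyset$ and we are done.

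\textbf{Step 3: the non-split case (main obstacle).} Here $k'/k$ is quadratic and semilinear maps over the nontrivial coset swap $p_1$ and $p_2$. I would pass to the rank-$2$ bundle $\cW=q_*N_{p_1}$. Equivalently, $\cW$ is the bundle over $\cG_S$ attached to the distinguished plane $\langle e_0,e_1\rangle\subset V_S$ spanned by $\ell$; it is distinguished because $\ell$ is. On it $r_1,r_2$ act by $\diag(-1,1)$ and $\diag(1,-1)$, so the embedding $C_2^2\hookrightarrow\GL_2(K)$ is faithful and monomial. I would then invoke the two-dimensional neutral representation criterion of~\cite{neutral-representation-dim-leq3} for this diagonal $C_2^2$, or directly show that any gerbe with this datum is neutral. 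The concrete route for the latter is: a $k'$-point of $\cP$ gives a model $S'$ over $k'$; then use the Galois swap to build a Weil cocycle over $k$. Its obstruction lies in $H^2(\Gal(k'/k),\,\cdot\,)$ for the centralizer. The aim is to kill that obstruction by rescaling along the two normal lines, whose product $N_{p_1}\otimes N_{p_2}$ descends to $\cG_S$ as $\det\cW$ with character $r_1r_2\mapsto 1$, $r_i\mapsto -1$.

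I expect Step~3 to be the hard step. Verifying that this specific induced two-dimensional representation of $C_2^2$ is neutral, rather than just faithful, is where the argument could fail. If the general criterion does not apply, I would fall back on the explicit one-parameter normal form of stratum $2B$ and construct the $k$-model by hand, through a trace construction over $k'$ as in Proposition~\ref{prop:degree-four}.
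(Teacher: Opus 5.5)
Your Step~3 is essentially the paper's proof: the paper takes $\cW=q_*\bigl(\cO_{\bP(\cV)}(-1)|_{\cP}\bigr)$, the rank-$2$ bundle of the plane spanned by $\ell$, notes that $\Aut(S)$ acts on it through the faithful diagonal $C_2^2\subset\GL_2(K)$, and concludes by~\cite[Theorem~5.3]{neutral-representation-dim-leq3} that this embedding is neutral. Your $q_*N_{p_1}$ is not literally the same bundle as the plane bundle, so ``equivalently'' is inaccurate, but it has the same diagonal inertia image and works equally well; since the two-dimensional criterion applies on $\cG_S$ whether or not $\cP\to\cG_S$ splits, the case split of Step~2 and the fallback are unnecessary.
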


\begin{proof}
Let
\[
\cL:=\cO_{\bP(\cV)}(-1)|_{\cP}
\qquad\text{and}\qquad
\cW:=q_*\cL.
\]
The relative Eckardt locus $\cP$ is a closed substack of the universal surface $\cS\subset\bP(\cV)$. Pulling back the tautological line bundle $\cO_{\bP(\cV)}(-1)$ along $\cP\to\bP(\cV)$ therefore gives the line bundle $\cL$ on $\cP$. We now explain why its pushforward is a vector bundle on $\cG_S$.

For an object $T\to\cG_S$, set $\cP_T=\cP\times_{\cG_S}T$. The morphism $q_T:\cP_T\to T$ is finite \'etale of degree $2$. After an \'etale cover $T'\to T$, it splits as a disjoint union of two copies of $T'$. Over this cover,
\[
(q_*\cL)_{T'}\simeq
\cL|_{T'_1}\oplus\cL|_{T'_2},
\]
which is locally free of rank $2$. Local freeness and rank descend in the fpqc topology, and finite pushforward is compatible with base change here. Hence $\cW=q_*\cL$ is a rank $2$ vector bundle on $\cG_S$. On the geometric object $S$, after choosing an ordering of the two points, its fiber is
\[
\cW_K\simeq
\cO_{\bP(V_S)}(-1)|_{p_1}
\oplus
\cO_{\bP(V_S)}(-1)|_{p_2}.
\]
However $\cW$ itself can not be written as a direct sum of two rank one bundles, since a semilinear automorphism may exchange the two summands.

By~\cite[Lemma~9.5, Proposition~9.8, and Remark~9.9]{dolgachev-duncan}, every type $2B$ surface in characteristic zero admits anticanonical coordinates in which
\[
S:\quad x_0^2(x_2+c_0x_3)+x_1^2(c_1x_2+x_3)+x_2x_3(x_2+x_3)=0,
\]
the two Eckardt points are
\[
p_1=[1:0:0:0],\qquad p_2=[0:1:0:0],
\]
and the two generating reflections act by independent sign changes of $x_0$ and $x_1$. Consequently, the vector bundle $\cW\ra \cG_S$ is a twisted representation of
\[
C_2^2=\left\{\begin{pmatrix}\pm1&0\\0&\pm1\end{pmatrix}\right\}\subset\GL_2(K),
\]
up to conjugation; see \cite{bresciani2026neutralrepresentationsfinitediagonalizable}. It is faithful since $\Aut_K(S)\cong C_2^2$, and it is neutral by~\cite[Theorem~5.3]{neutral-representation-dim-leq3}.

The rank $2$ bundle $\cW$ therefore gives a faithful morphism from $\cG_S$ to $B\GL_2$ with neutral inertia embedding, so $\cG_S$ is neutral and $S$ descends to $k_S$.
\end{proof}

\subsection{Three, six, and ten Eckardt points}
For configuration $\mathsf{Eck}_3$, the three Eckardt points lie on one trihedral line, and the full automorphism group is $S_3$. The $\mathsf{Eck}_6$ configuration is identified with the six edges of a tetrahedron, and its full automorphism group is $S_4$. The $\mathsf{Eck}_{10}$ configuration is the Clebsch configuration, which recovers the five faces of the Sylvester pentahedron; its full automorphism group is $S_5$.

\begin{prop}\label{prop:3D}\label{prop:4B5A}
If $S$ has automorphism type $3D$, $4B$, or $5A$, then $S$ descends to $k_S$.
\end{prop}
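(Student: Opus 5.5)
In each of the three cases, $\Aut(S)$ is generated by reflections. Specifically, for $3D$, $4B$, $5A$ we have $\Aut(S)=S_3,S_4,S_5$, which is the reflection group of the Eckardt configuration listed in the table. So every automorphism is a product of reflections centered at Eckardt points. The plan is to use the finite set $P=E(S)$ of Eckardt points, exactly as in the proof of Proposition~\ref{prop:2B}. Since $P$ is invariant under all of $\widetilde G_S$, it defines a finite \'etale cover
\[
q:\cP=[P/\widetilde G_S]\to\cG_S .
\]
Its degree is $3$, $6$ or $10$, and the geometric inertia $\Aut(S)$ permutes $P$ as $S_3$ on three points, $S_4$ on the six edges of a tetrahedron, and $S_5$ on the ten Clebsch points, i.e.\ on unordered pairs of $\{1,\dots,5\}$. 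Each action is faithful.

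The crucial observation is that these permutation actions come from the natural permutation actions on a smaller set. The trihedral line for $\mathsf{Eck}_3$ gives the three points themselves. For $\mathsf{Eck}_6$, the four vertices of the tetrahedron are the four trihedral triples, so they are intrinsically defined. For $\mathsf{Eck}_{10}$, the five faces of the Sylvester pentahedron are intrinsic. In each case we get a finite \'etale cover $\cF\to\cG_S$ of degree $n=3,4,5$, whose inertia action is the standard faithful permutation action of $S_n$. This cover is intrinsic because it is built from incidence data of the Eckardt configuration, which is preserved by all semilinear isomorphisms.

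A degree-$n$ finite \'etale cover of $\cG_S$ is the same as a morphism $\cG_S\to BS_n$. Since the inertia acts faithfully, this morphism is faithful. Its geometric inertia embedding is $S_n\hookrightarrow S_n(K)$, which is the identity. The key step is that the identity embedding $S_n\subset S_n$ is neutral in the sense of Definition~\ref{def:neutral}, with $\Gamma$ the constant group $S_n$. Indeed, a faithful morphism $\cG\to BS_n$ which is an isomorphism on inertia is representable of degree $1$ over the image. Equivalently, $\cG$ is the gerbe of trivialisations of an \'etale algebra $A$ of degree $n$ whose automorphism group is all of $S_n$. Such a gerbe is neutral: the object over $k$ is just $A$ itself. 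More concretely, $BS_n(k)$ classifies degree-$n$ \'etale algebras, and a gerbe over $k$ whose inertia maps isomorphically to $S_n$ is an open and closed substack of $BS_n$. This is because $BS_n$ is a gerbe with the same inertia, so a monomorphism of gerbes with the same band is an isomorphism, and $BS_n$ has the $k$-point $k^n$. Hence $\cG_S\simeq BS_n$ is neutral, and $S$ descends to $k_S$.

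The main obstacle is this identification: that $\cG_S\to BS_n$ is an equivalence rather than merely faithful. It needs $\Aut(S)\to S_n$ to be an isomorphism. This requires checking against \cite[Table~1]{dolgachev-duncan} that the full automorphism group equals the reflection group, which it does, and that its action on vertices or pentahedron faces is the standard one. For $4B$ one must also verify that the trihedral triples are exactly the four vertex stars. Failing a clean argument, I would instead use the permutation representation $q_*\cO_{\cF}$ of rank $n$. I would split off the trivial summand to obtain the faithful standard representation, of dimension $2$ for $S_3$. For $S_3$ this falls in the range of \cite{neutral-representation-dim-leq3}, and for $S_4,S_5$ I would fall back on the gerbe-isomorphism argument above.
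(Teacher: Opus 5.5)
Your argument is correct, but it takes a different route from the paper.

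The paper uses only the abstract group. For $n=3,4,5$ the group $S_n$ has trivial center and $\operatorname{Out}(S_n)=1$, so by \cite[Proposition~4.2]{bresciani-vistoli} every finite gerbe with geometric inertia $S_n$ is neutral. No geometry of the Eckardt configuration is needed beyond $\Aut(S)\simeq S_n$.

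You instead attach to $S$ an intrinsic $n$-element set: the three collinear Eckardt points, the four trihedral lines, or the five Sylvester faces. This gives a degree-$n$ finite \'etale cover of $\cG_S$, i.e.\ a morphism $\cG_S\to BS_n$. You then use that this morphism is an isomorphism on inertia, hence an equivalence of gerbes, so $\cG_S\simeq BS_n$ has a $k_S$-point. The equivalence is cleanest to justify directly: both sides locally have objects, and the Isom sheaves are torsors under isomorphic groups. That is better than your ``open and closed substack'' phrasing.

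The underlying reason is the same completeness of $S_n$; your cover just exhibits the equivalence explicitly. What this buys is concreteness. Isomorphism classes of $k_S$-models of $S$ correspond to degree-$n$ \'etale $k_S$-algebras via the Galois set of your $n$ objects, and in particular there is a model in which all of them are rational. The price is geometric input from Dolgachev--Duncan, namely that the $n$-set is intrinsic and the action is the standard one. The paper's one-line argument sidesteps this entirely.

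On the check you flag for $4B$: if reflections are labelled by transpositions, a trihedral triple is a triple of transpositions generating an $S_3$. That is a face of the tetrahedron rather than a vertex star. The two descriptions differ by passing to opposite edges, and either way there are exactly four trihedral lines, permuted by $S_4$ in the standard way. So your argument is unaffected.

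Your fallback paragraph is unnecessary, and would not help anyway since it uses the same cover.
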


\begin{proof}
In these three cases, respectively,
\[
\Aut_K(S)\simeq S_n,
\quad n=3,4,5.
\]
For $n=3,4,5$, the symmetric group $S_n$ has trivial center and every automorphism of $S_n$ is inner. Hence
\[
\Aut(S_n)\to\operatorname{Out}(S_n)
\]
is split. By \cite[Proposition 4.2]{bresciani-vistoli}, $\cG_S$ is neutral, and $S$ descends to $k_S$.
\end{proof}

\subsection{Four Eckardt points}
The configuration $\mathsf{Eck}_4$ consists of a point $t$ and three further points $p_1,p_2,p_3$. The point $t$ is the common point of three exceptional lines on $S$, each joining it to one of the $p_i$, whereas $p_1,p_2,p_3$ lie on a trihedral line not contained in $S$. This characterizes $t$ uniquely among the four Eckardt points. Since the characterization is preserved by every isomorphism $\varphi:S^{\sigma}\xrightarrow{\sim}S$, one has
\[
\varphi(t^{\sigma})=t.
\]
Thus $t$ is distinguished, in the sense of~\eqref{eq:intrinsic-point-galois}, and the normal line $N_t$ is defined on $\cG_S$. The full geometric automorphism group is $S_3\times C_2$, whose center is the $C_2$ generated by the reflection centered at $t$.

\begin{prop}\label{prop:6E}
If $S$ has automorphism type $6E$, then $S$ descends to $k_S$.
\end{prop}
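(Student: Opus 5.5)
The plan is to build, from intrinsic data attached to the $\mathsf{Eck}_4$ configuration, a faithful vector bundle of rank at most $3$ on $\cG_S$ whose geometric inertia embedding is neutral by~\cite{neutral-representation-dim-leq3}, exactly as in Propositions~\ref{prop:C1} and~\ref{prop:2B}. A purely group-theoretic argument as in Proposition~\ref{prop:4B5A} does not apply. Although $S_3\times C_2$ has center $C_2$, it has outer automorphisms, for instance $(s,z)\mapsto(s,z\cdot\operatorname{sgn}(s))$, so \cite[Proposition~4.2]{bresciani-vistoli} cannot be used directly.

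\emph{Step 1: the normal line $N_t$.} Since $t$ is distinguished, $N_t$ is a line bundle on $\cG_S$. Take a linear lift of the central reflection $r_t$ acting by $1$ on its fixed hyperplane and by $-1$ on the center $t$. Lemma~\ref{lem:normal-character} then gives the normal character $1/(-1)=-1$, so $r_t$ acts nontrivially on $N_t$. However, $N_t$ alone cannot be faithful, because $S_3\times C_2$ is not cyclic.

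\emph{Step 2: the three points $p_1,p_2,p_3$.} The set $P=\{p_1,p_2,p_3\}$ is distinguished: it is the complement of the distinguished point $t$ in $E(S)$. As in the $\mathsf{Eck}_2$ case, I set $q:\cP=[P/\widetilde G_S]\to\cG_S$ and $\cW=q_*\bigl(\cO_{\bP(\cV)}(-1)|_{\cP}\bigr)$, which is a rank $3$ bundle. Geometrically, $S_3$ permutes the three points, generated by the three reflections centered at the $p_i$, so $\cW$ is monomial with permutation image $S_3$. The reflection $r_t$ commutes with each $r_{p_i}$ and preserves each line $tp_i$, so it fixes each $p_i$. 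I will check using the normal form of~\cite{dolgachev-duncan} for type $6E$ that each $p_i$ lies in the fixed hyperplane of $r_t$. If so, $r_t$ acts trivially on $\cW$.

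\emph{Step 3: a faithful bundle.} I then form $\cW\otimes N_t$. Here $r_t$ acts by $-1$, and $S_3$ still acts through monomial matrices whose underlying permutation is the faithful permutation action. Since the center acts by a nontrivial scalar and the quotient acts faithfully, this gives a faithful morphism $\cG_S\to B\GL_3$. If instead some $p_i$ turns out not to lie in the fixed hyperplane, then $\cW$ is already faithful and I use it directly.

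\emph{Step 4: neutrality, the main obstacle.} It remains to show that the resulting embedding $S_3\times C_2\hookrightarrow\GL_3(K)$ is neutral. I will compute it explicitly from the normal form: it should be conjugate to $\{\pm1\}\cdot S_3$, with $S_3$ acting by permutation matrices, possibly twisted by the sign character. I will then match it against the classification of neutral embeddings in dimension $3$ in~\cite{neutral-representation-dim-leq3}. This is where I expect the difficulty. The $3$-dimensional permutation representation is reducible (trivial $\oplus$ standard), so the relevant criterion is probably the one for reducible or monomial representations. If needed, I will instead split off the distinguished $S_3$-invariant line, which corresponds to the trihedral line spanned by the $p_i$, and use a rank $2$ faithful piece such as (standard)$\otimes N_t$ of $S_3\times C_2\subset\GL_2$, whose neutrality should also be covered by~\cite{neutral-representation-dim-leq3}. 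Once neutrality holds, $\cG_S$ is neutral and $S$ descends to $k_S$.
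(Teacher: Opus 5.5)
There is a genuine gap, and it is where you expect it: Step~4 is the whole content of the proposition, and you do not supply it. You never show that $\{\pm1\}\times S_3\subset\GL_3(K)$, or the dihedral group of order $12$ in $\GL_2(K)$, is a neutral embedding; you only hope it is covered by \cite{neutral-representation-dim-leq3}.

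Steps~2--3 also contain a concrete error. The linear action of $\Aut(S)$ on $\cV$ is not yours to choose; it is the canonical action on $\H^0(S,-K_S)^\vee$. Since $r_t$ acts by $-1$ on $T_tS$, it acts trivially on $\omega_S^{-1}|_t\simeq\cO_{\bP(V_S)}(1)|_t$. So it acts by $+1$ on the line $t$ and by $-1$ on its fixed hyperplane. The $p_i$ automatically lie on that hyperplane, since any fixed point of a projective reflection other than its center does. Hence $r_t$ acts on $\cW$ by $-1$, and $\cW$ is already faithful. It carries exactly the embedding $\{\pm1\}\times S_3$, with $S_3$ acting by permutation matrices, that you predict. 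By contrast, in $\cW\otimes N_t$ the center acts trivially, so the bundle of Step~3 is not faithful. Your fallback only covers the case of some $p_i$ off the hyperplane, which never happens.

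Also, \cite[Proposition~4.2]{bresciani-vistoli} fails here not because of outer automorphisms: $\Aut(S_3\times C_2)\to\operatorname{Out}(S_3\times C_2)\simeq C_2$ does split. It fails because of the center. For example, $BS_3\times\mathcal{B}$, with $\mathcal{B}$ a non-neutral $\mu_2$-gerbe, has the same inertia and no rational point.

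The paper avoids any $2$- or $3$-dimensional neutrality statement by isolating the center. It rigidifies along $Z=\langle r_t\rangle$. The rigidification $\cG_S\sslash Z$ has inertia $S_3$ and is neutral by \cite[Proposition~4.2]{bresciani-vistoli}. The fiber $\cK$ of $\cG_S$ over a $k_S$-point of it is a gerbe with inertia $C_2$. The restriction $N_t|_{\cK}$ is faithful, because the normal character $-1$ of $r_t$ does not depend on the lift. The rank-one criterion \cite[Proposition~5.1]{neutral-representation-dim-leq3} then neutralizes $\cK$, hence $\cG_S$.

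If you prefer to keep your route, Step~4 can be closed by hand in rank $2$:
\begin{enumerate}
\item The trihedral line through $p_1,p_2,p_3$ gives a rank-$2$ subbundle $\cM\subset\cV$, on which the inertia acts faithfully as the dihedral group of order $12$.
\item The coarse space of $\bP(\cM)$ is a form of $\bP^1$ whose unique branch point of index $3$ is rational, so it is $\bP^1_{k_S}$.
\item At a rational non-branch point, the residual gerbe has inertia $\{\pm1\}$ acting faithfully on $\cO_{\bP(\cM)}(-1)$. By the rank-one criterion it has a rational point, which maps to $\cG_S$.
\end{enumerate}
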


\begin{proof}
For every object $\xi$ of $\cG_S$, the center of $\Aut(\xi)$ is preserved by pullback and by conjugation. Rigidify $\cG_S$ along $Z=C_2\subset S_3\times C_2$ in the sense of~\cite[Appendix~C]{abramovich-graber-vistoli}:
\[
r:\cG_S\to
\cH:=\cG_S\!\sslash Z.
\]
The morphism $r$ is a $C_2$-gerbe and the geometric inertia of $\cH$ is
\[
(S_3\times C_2)/C_2\simeq S_3.
\]
Again, using the fact that the group $S_3$ has trivial center and $\Aut(S_3)\to\operatorname{Out}(S_3)$ is split, by \cite[Proposition 4.2]{bresciani-vistoli}, we have $\cH$ is neutral. Choose an object $h:\Spec k_S\to\cH$ and form the fiber
\[
\cK:=\cG_S\times_{\cH,h}\Spec k_S.
\]
Then $\cK$ is a gerbe with geometric inertia $C_2$.

The normal line $N_t$ on $\cG_S$ restricts to a line bundle on $\cK$. Its geometric inertia representation is faithful: the nontrivial element of $C_2$ is the projective reflection centered at $t$, and it acts on the normal direction by $-1$. Therefore $\cK$ is neutral by~\cite[Proposition~5.1]{neutral-representation-dim-leq3}, and hence $\cG_S$ is neutral.
\end{proof}

\subsection{The Fermat surface}
\begin{prop}\label{prop:Fermat}
If $S$ has automorphism type $3C$, then $S$ descends to $k_S$.
\end{prop}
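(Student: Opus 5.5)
Type $3C$ is the Fermat cubic surface $x_0^3+x_1^3+x_2^3+x_3^3=0$. Up to isomorphism it is the unique smooth cubic surface with $\Aut(S)\simeq C_3^3\rtimes S_4$; equivalently, it is the unique one whose Eckardt configuration is $\mathsf{Eck}_{18}$. We use this uniqueness, exactly as in Proposition~\ref{prop:degree-at-least-five}, rather than any gerbe-theoretic neutrality criterion. The point is that the geometric inertia $C_3^3\rtimes S_4$ has nontrivial normal subgroups and non-split outer structure, so a direct argument is simpler than trying to apply \cite[Proposition~4.2]{bresciani-vistoli}.

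\textbf{Step 1.} We recall from \cite[Table~1]{dolgachev-duncan} that stratum $3C$ is a single point of the moduli space. Concretely, any cubic surface whose automorphism group contains the full group $C_3^3\rtimes S_4$ is isomorphic to the Fermat surface. One can see this directly: the $C_3^3$ part consists of diagonal automorphisms. Their simultaneous eigenspace decomposition gives coordinates in which the equation is a linear combination of the $x_i^3$. Rescaling the coordinates then brings every coefficient to $1$.

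\textbf{Step 2.} Since the Fermat equation has coefficients in $\bQ\subset k$, we have $S^{\sigma}\simeq S$ for every $\sigma\in G_k$. Hence $k_S=k$.

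\textbf{Step 3.} The Fermat equation then defines a model of $S$ over $\bQ$, and hence over $k=k_S$. In gerbe language, this model gives an object of $\cG_S(k_S)$, so $\cG_S$ is neutral.

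The only point that needs care is the uniqueness in Step~1, which we take from the Dolgachev--Duncan classification. Once uniqueness is known, no obstruction can arise, since the gerbe of a rigid moduli point that admits a $\bQ$-model is always neutral.
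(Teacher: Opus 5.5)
Your argument is correct and is essentially the paper's proof: you identify type $3C$ with the Fermat cubic via the Dolgachev--Duncan classification, then use the model $x_0^3+x_1^3+x_2^3+x_3^3=0$ over $\bQ\subset k_S$. Your motivating aside is wrong, though nothing depends on it: $C_3^3\rtimes S_4$ has trivial center, and its outer automorphism group is $C_2$, generated by the genuine order-two automorphism that inverts the $C_3^3$ factor, so $\Aut(C_3^3\rtimes S_4)\to\operatorname{Out}(C_3^3\rtimes S_4)$ does split and \cite[Proposition~4.2]{bresciani-vistoli} would also apply.
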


\begin{proof}
By~\cite[Lemma~10.14 and \S 1.3]{dolgachev-duncan}, $S$ is the Fermat cubic surface. It has the model
\[
x_0^3+x_1^3+x_2^3+x_3^3=0
\]
over $\bQ\subset k_S$.
\end{proof}

\subsection{Cyclic cubic surfaces}
Now comes the last case. A smooth cubic surface is cyclic if it admits a cyclic triple-cover presentation
\[
S:\quad w^3=F(x_0,x_1,x_2),
\]
where $F=0$ is a smooth plane cubic. The deck transformation acts by $w\mapsto\zeta_3w$.

Dolgachev and Duncan prove that a smooth cubic surface is cyclic exactly when it admits an automorphism of class $3A$~\cite[Lemma~10.5]{dolgachev-duncan}. Here ``admits'' is an existence condition; it does not mean that the full automorphism group is the generic group on the $3A$ locus. In particular, the unique characteristic-zero surface of type $12A$ is also cyclic: it is the special member of the cyclic $3A$ family for which the branch plane cubic has an additional automorphism of order $4$. Consequently its automorphism group contains the class-$3A$ deck subgroup as well as elements of class $12A$; see~\cite[Section~1.4 and Lemma~12.15]{dolgachev-duncan}. 

\begin{lemm}\label{lem:intrinsic-cyclic}
Let $S$ be a non-Fermat cyclic cubic surface in characteristic zero. Then its deck subgroup $C_3\subset\Aut(S)$ is uniquely determined by $S$. Consequently, every isomorphism between two such surfaces carries the deck subgroup of one to the deck subgroup of the other.
\end{lemm}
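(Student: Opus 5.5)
The plan is to show that the deck subgroup is characteristic in $\Aut(S)$, using the explicit structure of $\Aut(S)$ for cyclic surfaces, and then to read off the statement about isomorphisms formally.

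\emph{Step 1: structure of $\Aut(S)$.} If $S:\ w^3=F(x_0,x_1,x_2)$ with $F=0$ a smooth plane cubic, the Eckardt points of $S$ lie over the nine inflection points of $F$. Hence $S$ has configuration $\mathsf{Eck}_9$ or $\mathsf{Eck}_{18}$, and the latter is the Fermat surface (type $3C$). By the table taken from \cite[Table~1]{dolgachev-duncan}, a non-Fermat cyclic surface therefore has
\[
\Aut(S)\simeq H_3(3)\rtimes C_2\quad\text{or}\quad H_3(3)\rtimes C_4 .
\]
In both cases $H_3(3)$ is the unique Sylow $3$-subgroup, hence normal, and so characteristic. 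Its center $Z(H_3(3))\simeq C_3$ is then characteristic in $\Aut(S)$ as well.

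\emph{Step 2: the deck group is that center.} The deck generator $\delta=\diag(1,1,1,\zeta_3)$ acts trivially on $x_0,x_1,x_2$. The subgroup $H_3(3)$ is generated by $\delta$ together with lifts of the translation group $C_3^2$ of the elliptic curve $F=0$; concretely, these are cyclic permutations and $\zeta_3$-scalings of the $x_i$ in Hesse coordinates. Every such lift commutes with $\delta$ because $\delta$ only rescales $w$. So $\langle\delta\rangle$ is central, and it equals $Z(H_3(3))$ since that center has order $3$.

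\emph{Step 3: uniqueness.} Suppose $S$ also has a second cyclic presentation. Its deck group is a subgroup generated by an element of class $3A$ and has the same properties, so by Steps 1 and 2 it also equals $Z(H_3(3))$. Hence the deck subgroup is well defined and characteristic, and any isomorphism $\psi:S\to S'$ between two such surfaces satisfies $\psi\,\mathrm{deck}(S)\,\psi^{-1}=\mathrm{deck}(S')$. The same argument applies verbatim to semilinear isomorphisms $S^\sigma\to S$, which is the form that will be used afterwards.

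\emph{Main obstacle.} The delicate point is Step 2 applied to an arbitrary second presentation. I must know that the abstract group $H_3(3)$ in the Dolgachev–Duncan table really is the group generated by the deck transformation and the lifted Hesse translations, and not merely an abstractly isomorphic group; otherwise one cannot rule out a second homology of order $3$ with a different center.

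To settle this, I would first try a direct geometric argument. An order-$3$ homology $g$ with center $c\neq c_0$ normalizes the normal subgroup $H_3(3)$. Conjugating $\delta$ by $g$ produces a homology with center $g(c_0)$, and this must again lie in $H_3(3)$. Comparing fixed planes, which are hyperplane sections cutting out a smooth genus-one curve, should then force $c=c_0$ unless $S$ is the Fermat surface.

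If that comparison fails, the fallback is to use \cite[Lemma~10.5]{dolgachev-duncan} together with the conjugacy-class data of $W(E_6)$. That data shows that the only class-$3A$ elements in $H_3(3)\rtimes C_{2}$ and $H_3(3)\rtimes C_4$ are the two nontrivial central elements.
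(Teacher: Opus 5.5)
Your proof is correct, but it takes a different route from the paper. The paper treats the two non-Fermat cases separately and by citation. On the general $3A$ stratum it quotes \cite[Lemma~10.8]{dolgachev-duncan}: the deck group is the center of the reflection group $H_3(3)\rtimes C_2=\Aut(S)$. For the $12A$ surface it appeals to the classification argument of \cite[Lemma~12.15]{dolgachev-duncan}. You instead characterize the deck group intrinsically as the center of the unique, hence characteristic, Sylow $3$-subgroup of $\Aut(S)$. This uses only the orders $54$ and $108$ from the table and handles $3A$ and $12A$ uniformly. It also makes the ``consequently'' clause automatic, including its semilinear form $S^\sigma\to S$. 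The paper's version is shorter, but it leaves the real content in Dolgachev--Duncan.

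The ``main obstacle'' you raise is already resolved by your own Step~1. The automorphisms $(x,w)\mapsto(Ax,\lambda w)$ lifting the $3$-torsion translations of $E$, together with $\delta$, form a group that maps onto $C_3^2\subset\PGL_3$ with kernel $\langle\delta\rangle$. So this group has order $27$. Since $|\Aut(S)|\in\{54,108\}$, it is a Sylow $3$-subgroup and therefore equals the unique one, with $\delta$ central in it. The same holds verbatim for any second cyclic presentation, so you need neither the fixed-plane comparison nor the $W(E_6)$ class data.

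One small repair is needed in Step~1. The points over the flexes give only \emph{at least} nine Eckardt points; on the Fermat surface the other nine do not lie over flexes. You must therefore also rule out $\mathsf{Eck}_{10}$, which is immediate because $27\nmid|S_5|=120$.
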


\begin{proof}
On the $3A$ stratum, the reflections associated with the nine Eckardt points generate $H_3(3)\rtimes C_2$, and the deck subgroup is its center by~\cite[Lemma~10.8]{dolgachev-duncan}. This is the full automorphism group on the general $3A$ stratum. On the exceptional $12A$ surface, the uniqueness of the cyclic surface structure is part of the classification argument in~\cite[Lemma~12.15]{dolgachev-duncan}. The only cyclic surface where uniqueness fails is the Fermat surface, which is excluded.
\end{proof}

\begin{lemm}\label{lem:fields-cyclic}
Let $S$ be a non-Fermat cyclic cubic surface and let $E\subset\bP^2_K$ be the branch plane cubic of its cyclic structure. Then
\[
k_S=k_E,
\]
where $k_E$ is the field of moduli of $E$ as a plane cubic.
\end{lemm}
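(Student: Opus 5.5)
To prove $k_S=k_E$, I will show that the set of $\sigma\in\Gal(K/k)$ with $S^{\sigma}\simeq S$ coincides with the set of $\sigma$ with $E^{\sigma}\simeq E$ projectively. Each inclusion comes from a functorial passage between a non-Fermat cyclic cubic surface and its branch cubic, in both directions. Lemma~\ref{lem:intrinsic-cyclic} guarantees that this passage is well defined up to isomorphism.

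\emph{From surfaces to cubics.} Let $\varphi:S^{\sigma}\xrightarrow{\sim}S$. The surface $S^{\sigma}$ is again a non-Fermat cyclic cubic surface, with presentation $w^3=F^{\sigma}$ and branch cubic $E^{\sigma}$. By Lemma~\ref{lem:intrinsic-cyclic}, $\varphi$ carries the deck subgroup of $S^{\sigma}$ to that of $S$.

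As in Section~5.1, $\varphi$ extends to a $\sigma$-semilinear projective isomorphism of the anticanonical $\bP^3$'s. This extension intertwines the two deck groups. Each deck generator acts linearly on $V_S$ with eigenspaces of dimensions $3$ and $1$, namely the plane $\{w=0\}$ and the vertex point $[0:0:0:1]$. So the extension preserves the fixed plane and the vertex, and therefore commutes with projection from the vertex.

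The quotient $S\to S/C_3\simeq\bP^2$ is this linear projection, and its branch locus is $S\cap\{w=0\}\simeq E$. Hence $\varphi$ descends to a projective isomorphism $\bP^2\to\bP^2$ carrying $E^{\sigma}$ onto $E$.

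\emph{From cubics to surfaces.} Let $\psi\in\PGL_3(K)$ carry $E^{\sigma}$ onto $E$, and lift it to $A\in\GL_3(K)$. Then $F\circ A=\lambda F^{\sigma}$ for some $\lambda\in K^{*}$. Choose $\mu$ with $\mu^3=\lambda$. The map $(x,w)\mapsto(Ax,\mu w)$ is then an isomorphism from $w^3=F^{\sigma}$ onto $w^3=F$, that is, $S^{\sigma}\simeq S$. This direction is routine because $K$ is algebraically closed.

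The main obstacle is the first direction. There I must check carefully that the linear extension of $\varphi$ really intertwines the deck generators themselves, up to the choice of generator $\zeta_3$ versus $\zeta_3^{2}$, and not merely their projective classes. Both generators have the same eigenspaces, so the fixed plane and vertex are preserved either way, and this suffices. I would also confirm that "field of moduli of $E$ as a plane cubic" means up to $\PGL_3$, which is exactly what both directions produce.
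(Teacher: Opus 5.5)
Your proof is correct and follows the paper's argument. One inclusion uses Lemma~\ref{lem:intrinsic-cyclic} to see that any isomorphism $S^{\sigma}\simeq S$ respects the deck groups and so descends to a projective isomorphism $E^{\sigma}\simeq E$; the other lifts a projective isomorphism of branch cubics by $(x,w)\mapsto(Ax,\mu w)$ with $\mu^3$ equal to the scaling constant, and your fixed-plane and vertex argument just spells out the descent step that the paper states in one line.
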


\begin{proof}
For $\sigma\in \Gal(K/k_S)$, every isomorphism $S^{\sigma}\simeq S$ preserves the unique deck subgroup by Lemma~\ref{lem:intrinsic-cyclic}; hence it descends to a projective isomorphism $E^{\sigma}\simeq E$. Thus $k_E\subseteq k_S$.

Conversely, suppose that $E^{\sigma}$ and $E$ are projectively isomorphic, i.e., isomorphic via an isomorphism of $\bP_K^2$. Choose equations $F^{\sigma}$ and $F$ and a projective transformation $A$ such that
\[
F^{\sigma}(Ax)=cF(x)
\]
for some $c\in K^\times$. Choose $\lambda\in K^\times$ with $\lambda^3=c$. Then
\[
(x,w)\mapsto(Ax,\lambda w)
\]
lifts the projective isomorphism of branch curves to an isomorphism of cyclic cubic surfaces. Hence $k_S\subseteq k_E$.
\end{proof}

\begin{lemm}\label{lem:plane-cubics}
Every smooth plane cubic over $K$ has a plane model over its field of moduli as a plane cubic.
\end{lemm}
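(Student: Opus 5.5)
A smooth plane cubic $E\subset\bP^2_K$ is embedded by a degree-$3$ line bundle $\cO_E(1)$. Its projective automorphism group is the finite extension
\[
1\to (\mathrm{Jac}\,E)[3]\to \Aut(E,\bP^2)\to \Aut(E,0)\to 1,
\]
where $\Aut(E,0)\in\{C_2,C_4,C_6\}$. The plan is to produce a $k_E$-rational object of the residue gerbe $\cG_E$ of plane cubics, again via the Bresciani--Vistoli formalism. We construct it in three steps: first descend the genus-one curve with its degree-$3$ polarization class, then descend the plane embedding, then check that the two descents are compatible.

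\textbf{Step 1 (curve with polarization).} The $j$-invariant $j(E)$ lies in $k_E$, since every semilinear projective isomorphism $E^\sigma\simeq E$ is in particular an isomorphism of curves, and $j$ is Galois-equivariant. Choose an elliptic curve $E_0$ over $k_E$ with $j(E_0)=j(E)$; for example, take the standard Weierstrass model with that $j$-invariant. This gives an elliptic curve over $k_E$ that becomes isomorphic to $E$ over $K$. Its Weierstrass embedding, using $|3\cdot 0|$, is a plane cubic over $k_E$ that is geometrically isomorphic to $E$ as a curve. Since all degree-$3$ divisor classes on $E$ are equivalent under translations, and translations extend to projective automorphisms, any two plane embeddings of $E$ are projectively equivalent. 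So the plane cubic $E_0\subset\bP^2_{k_E}$ is projectively isomorphic to $E$ over $K$.

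\textbf{Step 2 (reduction to the right field).} A priori the field of moduli of $E$ as a plane cubic could be smaller than that of the abstract curve, but it cannot be bigger. Both fields equal $\bQ(j)\cdot k$ relative to $K/k$: an isomorphism $E^\sigma\simeq E$ of curves extends to a projective isomorphism by the remark in Step 1, and conversely a projective isomorphism restricts to an isomorphism of curves. Hence $k_E=k(j(E))$, and the model $E_0$ from Step 1 is already defined over $k_E$.

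\textbf{Main obstacle.} The key point is the claim that every isomorphism of curves $E^\sigma\to E$ extends to a projective transformation. This needs the pullback of $\cO_E(1)$ to agree with $\cO_{E^\sigma}(1)$, which fails in general: the pullback is only some degree-$3$ bundle. To fix this, we compose with a translation $\tau$ on $E$. Since $\Pic^3$ is a torsor under $\mathrm{Jac}\,E$ and the map $x\mapsto t_x^*L$ is surjective (by multiplication by $3$), a suitable translation matches the two bundles. Because $K$ is algebraically closed, this translation is defined over $K$, so the composite is a $K$-isomorphism that extends linearly. The remaining care is the case $j=0,1728$ over non-closed $k$, where one must make sure $k_E$ is computed relative to $K/k$. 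This is harmless, since the Weierstrass model with the given $j$ exists over $k(j)$ in all cases.
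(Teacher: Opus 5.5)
Your argument is correct and is essentially the paper's proof: the projective class of a smooth plane cubic is determined by $j$, because any two degree-$3$ polarizations differ by a translation, so the field of moduli is $k(j)$ and a Weierstrass model over $k(j)$ gives the required plane model. Two small corrections: the gerbe framing you announce at the start is never actually used, and the phrase in Step~1 that ``translations extend to projective automorphisms'' is false as stated (only $3$-torsion translations do), so replace it with the precise pullback-matching statement from your last paragraph.
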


\begin{proof}
The projective isomorphism class of a smooth plane cubic over an algebraically closed field is determined by its $j$-invariant (any two degree-$3$ polarizations on a genus-one curve differ by a translation). Hence its field of moduli is $F=k(j)$. A standard Weierstrass equation over $F$ with invariant $j$ gives the required plane model.
\end{proof}

\begin{prop}\label{thm:cyclic}
Every smooth cyclic cubic surface over $K$ is defined over its field of moduli.
\end{prop}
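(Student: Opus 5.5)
The plan is to reduce descent of the surface to descent of its branch plane cubic, using the three lemmas just proved. First, dispose of the Fermat case: if $S$ is the Fermat cubic surface, it has automorphism type $3C$ and descends by Proposition~\ref{prop:Fermat}. So assume $S$ is a non-Fermat cyclic cubic surface, written $w^3=F(x_0,x_1,x_2)$ with branch cubic $E=\{F=0\}\subset\bP^2_K$. By Lemma~\ref{lem:intrinsic-cyclic} the deck subgroup, and hence $E$ up to projective isomorphism, is intrinsic to $S$. Lemma~\ref{lem:fields-cyclic} then gives $k_S=k_E$.

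Next, apply Lemma~\ref{lem:plane-cubics} to obtain a smooth plane cubic $E_0=\{F_0=0\}\subset\bP^2_{k_S}$, with $F_0\in k_S[x_0,x_1,x_2]$ homogeneous of degree $3$, such that $(E_0)_K$ is projectively isomorphic to $E$. Define
\[
S_0:\quad w^3=F_0(x_0,x_1,x_2)\subset\bP^3_{k_S}.
\]
Since $E_0$ is smooth, $S_0$ is a smooth cubic surface over $k_S$, and $(S_0)_K$ is cyclic with branch cubic $(E_0)_K$.

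It remains to check $(S_0)_K\simeq S$. Choose a projective transformation $A$ with $F_0(Ax)=cF(x)$ for some $c\in K^\times$, and pick $\lambda$ with $\lambda^3=c$. As in the proof of Lemma~\ref{lem:fields-cyclic}, the map $(x,w)\mapsto(Ax,\lambda w)$ is then an isomorphism $S\simeq (S_0)_K$. Hence $S$ is defined over $k_S$.

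The only delicate point is the identification $k_S=k_E$, which rests on the uniqueness of the deck subgroup. In particular it must cover the special $12A$ surface as well as the general $3A$ surfaces. This is supplied by Lemma~\ref{lem:intrinsic-cyclic}, so once those lemmas are granted the argument is a direct construction with no gerbe computation. If one did not want to rely on $k_S=k_E$, it would in fact suffice to know that $E$ has a model over $k_S$. Since $k_E\subseteq k_S$ holds by the first half of the proof of Lemma~\ref{lem:fields-cyclic}, Lemma~\ref{lem:plane-cubics} already provides a model of $E$ over $k_E\subseteq k_S$, which is all the construction above uses.
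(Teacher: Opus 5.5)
Your proposal is correct and is essentially the paper's proof: treat the Fermat surface via Proposition~\ref{prop:Fermat}, use Lemma~\ref{lem:fields-cyclic} to get $k_S=k_E$, take a model $F_0$ of the branch cubic over $k_S$ from Lemma~\ref{lem:plane-cubics}, and form $w^3=F_0$; your explicit map $(x,w)\mapsto(Ax,\lambda w)$ just writes out the isomorphism $S_0\otimes K\simeq S$ that the paper asserts. Your closing remark that only $k_E\subseteq k_S$ is needed is right, but this is exactly the direction of Lemma~\ref{lem:fields-cyclic} that uses the uniqueness of the deck subgroup (Lemma~\ref{lem:intrinsic-cyclic}), so it does not avoid the delicate point; what you can drop is the easy direction $k_S\subseteq k_E$.
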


\begin{proof}
The Fermat case is Proposition~\ref{prop:Fermat}. Assume that $S$ is not Fermat and put $k_0=k_S=k_E$ by Lemma~\ref{lem:fields-cyclic}. By Lemma~\ref{lem:plane-cubics}, choose a homogeneous cubic
\[
F_0\in k_0[x_0,x_1,x_2]
\]
whose zero locus becomes projectively isomorphic to $E$ over $K$. Then
\[
S_0:\quad w^3=F_0(x_0,x_1,x_2)
\]
is a smooth cubic surface over $k_0$, and $S_0\otimes_{k_0}K\simeq S$.
\end{proof}

\begin{proof}[Proof of Theorem~A in degree $3$]
By the classification in~\cite{dolgachev-duncan}, every smooth cubic surface has one of the automorphism types appearing in the table of Section~5. Type $1A$ is treated in Proposition~\ref{prop:1A}; types $2A$, $4A$, and $8A$ in Proposition~\ref{prop:C1}; type $2B$ in Proposition~\ref{prop:2B}; types $3D$, $4B$, and $5A$ in Proposition~\ref{prop:3D}; type $6E$ in Proposition~\ref{prop:6E}; and type $3C$ in Proposition~\ref{prop:Fermat}. The remaining $3A$ locus, including the special type $12A$, consists of cyclic cubic surfaces and is treated in Proposition~\ref{thm:cyclic}.
\end{proof}

\section{Del Pezzo surfaces of degrees one and two}
\subsection{Degree two}
Let $S$ be a smooth del Pezzo surface of degree $2$ over $K$. The anticanonical linear system defines a finite morphism of degree $2$
\[
\pi:S\to\bP^2_K
\]
branched along a smooth plane quartic $C\subset\bP^2_K$. Equivalently, $S$ is a quartic hypersurface
\[
S:\quad u^2=F_4(x_0,x_1,x_2)
\subset\bP_K(1,1,1,2).
\]
The deck transformation $u\mapsto-u$ is the Geiser involution; it is intrinsically determined by $S$.

\begin{lemm}\label{lem:degree-two-branch}
With notation as above, $k_S=k_C$. Moreover, $S$ is defined over $k_S$ if and only if $C$ is defined over $k_C$.
\end{lemm}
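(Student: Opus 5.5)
The plan follows the cyclic cubic case (Lemma~\ref{lem:fields-cyclic} and Proposition~\ref{thm:cyclic}), with the Geiser involution playing the role of the deck subgroup. The key input is that the double cover $\pi$ is intrinsic. It is the morphism given by the complete linear system $|-K_S|$, so every isomorphism $\varphi:S^{\sigma}\xrightarrow{\sim}S$ satisfies $\varphi^*(-K_S)\simeq -K_{S^{\sigma}}$. Hence $\varphi$ induces a linear isomorphism $\H^0(S^{\sigma},-K_{S^{\sigma}})\simeq \H^0(S,-K_S)$, and therefore a projective isomorphism $\bar\varphi:\bP^{2,\sigma}_K\to\bP^2_K$ with $\pi\circ\varphi=\bar\varphi\circ\pi^{\sigma}$. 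In particular $\varphi$ commutes with the Geiser involutions.

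\emph{First step: $k_C\subseteq k_S$.} Since $C$ is the branch locus of $\pi$ and $C^{\sigma}$ is the branch locus of $\pi^{\sigma}$, the compatibility above gives $\bar\varphi(C^{\sigma})=C$. So for every $\sigma$ fixing $k_S$, the quartic $C^{\sigma}$ is projectively isomorphic to $C$.

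\emph{Second step: $k_S\subseteq k_C$.} Suppose $A$ is a projective transformation with $F_4^{\sigma}(Ax)=cF_4(x)$ for some $c\in K^\times$. Choose $\lambda$ with $\lambda^2=c$. Then $(x,u)\mapsto(Ax,\lambda u)$ is an isomorphism $S\to S^{\sigma}$ of weighted hypersurfaces in $\bP_K(1,1,1,2)$. This proves $k_S=k_C$.

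\emph{Third step: the descent equivalence.} If $C$ has a model $C_0=\{F_0=0\}$ over $k_C$, set $S_0:\ u^2=F_0$. Over $K$ its branch quartic is projectively isomorphic to $C$, so the second step gives $(S_0)_K\simeq S$. Conversely, suppose $\mathfrak S$ is a model of $S$ over $k_S$. The anticanonical map of $\mathfrak S$ is defined over $k_S$; it is a double cover of the Brauer--Severi variety $\bP(\H^0(\mathfrak S,-K_{\mathfrak S})^\vee)$. This Brauer--Severi variety has a rational point because $\H^0(-K)$ is a genuine $3$-dimensional $k_S$-vector space, so it is $\bP^2_{k_S}$. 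Its branch divisor is then a plane quartic over $k_S$ that becomes $C$ over $K$.

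One subtlety needs care. \enquote{Defined over $k_C$} for $C$ should be read as having a plane model, so that field of moduli and model are taken as plane quartics. For a smooth plane quartic this agrees with the abstract notion, because the canonical embedding makes every isomorphism of curves projective. I therefore expect no serious obstacle. The only point requiring a remark is that $|-K_S|$ commutes with semilinear isomorphisms, exactly as in the cubic case.
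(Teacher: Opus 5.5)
Your proposal is correct and follows essentially the same route as the paper. The anticanonical double cover is intrinsic, which gives $k_C\subseteq k_S$; the lift $(x,u)\mapsto(Ax,\lambda u)$ with $\lambda^2=c$ gives both $k_S\subseteq k_C$ and the forward descent; and the branch divisor of the anticanonical morphism of a model over $k_S$, which lands in $\bP(\H^0(-K)^\vee)\simeq\bP^2_{k_S}$, gives the converse. Your remark that plane and abstract models of $C$ agree via the canonical embedding is exactly how the paper passes from an abstract model of $C$ to a plane quartic model.
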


\begin{proof}
Every isomorphism $S^{\sigma}\simeq S$ identifies the anticanonical maps, and hence induces a projective isomorphism $C^{\sigma}\simeq C$. Conversely, suppose that $C^{\sigma}$ and $C$ are projectively isomorphic. If $F_4^{\sigma}(Ax)=cF_4(x)$ for some $A\in\PGL_3(K)$ and $c\in K^\times$, then choosing $\lambda\in K^\times$ with $\lambda^2=c$ gives an isomorphism
\[
(x,u)\mapsto(Ax,\lambda u)
\]
between the double covers. Thus $k_S=k_C$.

If $C$ has a model over $k_C$, its canonical linear system gives a plane quartic model over $k_C$, and the corresponding double cover gives a model of $S$. Conversely, a model of $S$ over $k_S$ has an anticanonical morphism to
\[
\bP\bigl(\H^0(S,-K_S)^\vee\bigr)\simeq\bP^2_{k_S},
\]
whose branch divisor is a model of $C$.
\end{proof}

We recall the following explicit example of Artebani and Quispe~\cite[Section~4]{artebani-quispe}. Let $C\subset\bP^2_\bC$ be the plane quartic
\begin{equation}\label{eq:quartic-counterexample}
\begin{split}
y^4&+y^2(x-a_1z)(x+a_1^{-1}z)\\
&+(x-a_2z)(x+\overline{a}_2^{-1}z)
(x-a_3z)(x+\overline{a}_3^{-1}z)=0,
\end{split}
\end{equation}
where
\[
a_1=1,\quad a_2=1-i,\quad a_3=2(i-1).
\]
The curve $C$ is smooth and
\[
\Aut(C)=\langle\nu\rangle\cong C_2,
\quad
\nu(x:y:z)=(x:-y:z).
\]
There is an isomorphism
\[
\mu:C\to\overline C,
\quad
\mu(x:y:z)=(-z:iy:x),
\]
and every isomorphism from $C$ to $\overline C$ is equal to $\mu$ or $\mu\nu$. In both cases the composition with the conjugate is $\nu$, rather than the identity. Thus the field of moduli of $C$ relative to $\bC/\bR$ is $\bR$, but Weil's cocycle condition \cite[Theorem 1]{Weil1956} shows that $C$ is not defined over $\bR$.

\begin{rema}
    By \cite[Corollary 5.7]{bresciani-plane}, we know that the curve $C$ fails to be defined over $k_C$ only if $\Aut(C)\cong C_2$. Thus the Artebani–Quispe example has the only possible automorphism group for such a counterexample.
\end{rema}

\begin{prop}\label{prop:degree-two-counterexample}
There exists a smooth del Pezzo surface of degree $2$ over $\bC$ whose field of moduli relative to $\bC/\bR$ is $\bR$, but which is not defined over $\bR$.
\end{prop}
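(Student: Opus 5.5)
The plan is to take for $S$ the double cover of $\bP^2_\bC$ branched along the Artebani--Quispe quartic $C$ of~\eqref{eq:quartic-counterexample}:
\[
S:\quad u^2=F_4(x,y,z)\subset\bP_\bC(1,1,1,2),
\]
where $F_4$ is the left-hand side of~\eqref{eq:quartic-counterexample}. Since $C$ is a smooth plane quartic, $S$ is a smooth del Pezzo surface of degree $2$, and its anticanonical morphism is the projection to $\bP^2_\bC$ with branch curve $C$. The statement should then follow directly from Lemma~\ref{lem:degree-two-branch} together with the properties of $C$ recalled above.

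First I would check that the field of moduli of $S$ relative to $\bC/\bR$ is $\bR$. By Lemma~\ref{lem:degree-two-branch}, $k_S=k_C$, and $C\simeq\overline C$ via $\mu$. It is a projective isomorphism, being given by a linear change of coordinates of $\bP^2$. Hence $k_C=\bR$. Concretely, $\mu$ lifts to an isomorphism $S\simeq\overline S$ of the form $(x,u)\mapsto(Ax,\lambda u)$ with $\lambda^2=c$, as in the proof of Lemma~\ref{lem:degree-two-branch}.

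Next I would show that $S$ is not defined over $\bR$. By the second part of Lemma~\ref{lem:degree-two-branch}, $S$ is defined over $k_S=\bR$ if and only if $C$ is defined over $k_C=\bR$. The Artebani--Quispe computation recalled above shows that $C$ is not defined over $\bR$. Every isomorphism $\varphi:C\to\overline C$ is $\mu$ or $\mu\nu$, and in both cases $\overline\varphi\circ\varphi=\nu\neq\mathrm{id}$, so Weil's cocycle condition~\cite[Theorem~1]{Weil1956} fails. Hence $S$ has no real model.

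The only point I expect to need care is the reverse implication in Lemma~\ref{lem:degree-two-branch}: that a real model of $S$ yields a real model of $C$. This is handled in that lemma. The anticanonical map of a model over $\bR$ is defined over $\bR$, its target is $\bP(\H^0(-K)^\vee)\simeq\bP^2_\bR$ (a Brauer--Severi variety with a rational point, namely the image of any real point, or directly because it is the projectivization of a vector space), and its branch divisor is then a real plane quartic geometrically isomorphic to $C$. As a sanity check, one could verify the failure directly on $S$. Every isomorphism $S\to\overline S$ lifts $\mu$ or $\mu\nu$ with two choices of sign on $u$, so its composite with its conjugate lies over $\nu$ and is never the identity. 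The Geiser involution does not help, since it acts trivially on $\bP^2$.
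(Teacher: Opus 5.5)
Your proposal is correct and follows the paper's proof: the paper likewise takes $S\colon u^2=F_4$ over the Artebani--Quispe quartic, gets $k_S=k_C=\bR$ from Lemma~\ref{lem:degree-two-branch}, and rules out a real model because the branch curve of its anticanonical morphism would be a real model of $C$. Your extra check that $\bP(\H^0(-K)^\vee)\simeq\bP^2_\bR$ is right, provided you rely on the projectivization argument rather than on the existence of a real point.
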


\begin{proof}
Let $F_4$ be the quartic form in~\eqref{eq:quartic-counterexample}, and let
\[
S:\quad u^2=F_4(x,y,z)
\subset\bP_\bC(1,1,1,2).
\]
This is a smooth del Pezzo surface of degree $2$. By Lemma~\ref{lem:degree-two-branch}, its field of moduli is $\bR$. If $S$ had a real model, the branch curve of its anticanonical morphism would give a real model of $C$, a contradiction.
\end{proof}

\subsection{Degree one}
A smooth del Pezzo surface of degree $1$ over $\bC$ has a weighted anticanonical model
\begin{equation}\label{eq:degree-one-weighted}
S:\quad
w^2+z^3+F_4(x,y)z+F_6(x,y)=0
\subset\bP_\bC(1,1,2,3),
\end{equation}
where $F_4$ and $F_6$ are binary forms of degrees $4$ and $6$; see~\cite[Section~6.7]{dolgachev-iskovskikh}. The deck transformation
\[
\beta(x,y,z,w)=(x,y,z,-w)
\]
of the morphism defined by $|-2K_S|$ is the Bertini involution.

For $m=4,6$, let $W_m$ be the real vector space of binary forms satisfying
\begin{equation}\label{eq:quaternionic-binary-forms}
F_m(-y,x)=\overline{F_m(x,y)}.
\end{equation}
If
\[
F_m(x,y)=\sum_{r=0}^mc_rx^{m-r}y^r,
\]
then~\eqref{eq:quaternionic-binary-forms} is equivalent to
\[
c_{m-r}=(-1)^r\overline{c_r}.
\]
In particular, $W_m$ is a real form of the complex vector space of binary forms of degree $m$, and is Zariski dense in it.

\begin{prop}\label{prop:degree-one-counterexample}
There exists a smooth del Pezzo surface of degree $1$ over $\bC$ whose field of moduli relative to $\bC/\bR$ is $\bR$, but which is not defined over $\bR$.
\end{prop}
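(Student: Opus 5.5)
The plan is to take $F_4\in W_4$ and $F_6\in W_6$ general, let $S$ be the surface \eqref{eq:degree-one-weighted}, and use the semilinear map $\psi:S\to\overline S$ given by $\psi(x,y,z,w)=(-y,x,z,w)$. We then check Weil's cocycle condition directly on the full set of isomorphisms $S\to\overline S$.

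First, $\psi$ is an isomorphism $S\to\overline S$. By \eqref{eq:quaternionic-binary-forms}, $F_m(-y,x)=\overline{F_m}(x,y)$, where $\overline{F_m}$ denotes the form with conjugated coefficients. Hence substituting $(-y,x)$ into the equation of $S$ yields the equation of $\overline S$. This shows that the field of moduli is $\bR$.

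Second, we compute the composite $\overline\psi\psi$. On the weighted coordinates, $(x,y,z,w)\mapsto(-y,x,z,w)\mapsto(-x,-y,z,w)$. In $\bP(1,1,2,3)$ this point equals $(x,y,z,-w)$, via the scalar $\lambda=-1$, which acts by $(-1)^{\deg}$ on $(x,y,z,w)$. So $\overline\psi\psi=\beta$, the Bertini involution, and not the identity.

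Third, we show that for general choices $\Aut(S)=\{1,\beta\}$. For this we exhibit one member of the family, or argue by dimension count, whose automorphism group is exactly $C_2$. The automorphisms of a degree-one del Pezzo surface in this normal form are induced by $\GL_2$ acting on $(x,y)$ together with scalings of $z$ and $w$. The automorphism group modulo $\beta$ embeds into the stabilizer in $\PGL_2$ of the pair $(F_4,F_6)$ up to the weighted scaling. For general binary forms this stabilizer is trivial, and since $W_4\times W_6$ is Zariski dense, a general real point avoids the proper closed locus of pairs with extra symmetry. The same density argument gives smoothness, which requires that $z^3+F_4z+F_6$ has no multiple factors. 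Concretely, we need $4F_4^3+27F_6^2$ to have distinct roots and the usual nondegeneracy condition to hold.

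Finally, every isomorphism $S\to\overline S$ has the form $\psi\alpha$ with $\alpha\in\{1,\beta\}$. Since $\beta$ is central and defined over $\bR$, we have $\overline{\psi\alpha}\,\psi\alpha=\overline\psi\psi\,\alpha^2=\beta$. The cocycle condition of \cite[Theorem 1]{Weil1956} therefore fails for every choice, and $S$ has no real model.

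The main obstacle is the third step: rigorously proving that the automorphism group is only $\langle\beta\rangle$ for some point of the real form. I would handle it by reducing to the statement that a general binary sextic has trivial projective stabilizer. The condition "nontrivial projective stabilizer" defines a proper closed subset of the space of binary sextics, and the image of $W_6$ is dense, so a general real point avoids it. Any automorphism of $S$ preserves the unique base point of $|-K_S|$ and induces a projective automorphism of the pencil $\bP^1_{x,y}$. That induced automorphism must preserve the binary forms up to weighted scaling, so it is trivial. An automorphism acting trivially on $(x,y)$ must then be $\beta$ or the identity, since $z\mapsto z$ is forced by the vanishing of the $z^2$ coefficient and by $F_4,F_6\neq0$.
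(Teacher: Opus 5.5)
Your proof is correct and is essentially the paper's argument: you use the same real coordinate change $(x,y,z,w)\mapsto(-y,x,z,w)$, whose composite with its conjugate is the Bertini involution $\beta$, then Zariski density of $W_4\oplus W_6$ to get smoothness and $\Aut(S)=\langle\beta\rangle$, and finally Weil's cocycle criterion applied to both isomorphisms $\psi$ and $\psi\beta$. The only difference is that you sketch the automorphism computation directly (normal form preserved, kernel of $\Aut(S)\to\PGL_2$ equal to $\langle\beta\rangle$, trivial projective stabilizer of a general sextic), whereas the paper cites Dolgachev--Iskovskikh, type XXI; note also that distinct roots of $4F_4^3+27F_6^2$ is only a sufficient condition for smoothness, which is harmless since you only need a nonempty open set.
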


\begin{proof}
Choose a general pair
\[
(F_4,F_6)\in W_4\oplus W_6.
\]
We may assume that the surface~\eqref{eq:degree-one-weighted} is smooth and that
\[
\Aut(S)=\langle\beta\rangle\simeq C_2.
\]
Indeed, smoothness and the condition that the full automorphism group is generated by the Bertini involution hold on a nonempty Zariski open subset of the space of pairs $(F_4,F_6)$; the latter is the general type in the classification~\cite[Table~8, type~XXI]{dolgachev-iskovskikh}. Since $W_4\oplus W_6$ is Zariski dense, it meets this open subset.

Condition~\eqref{eq:quaternionic-binary-forms} implies that
\[
\varphi:\overline S\longrightarrow S,
\quad
(x,y,z,w)\mapsto(-y,x,z,w)
\]
is an isomorphism. The coordinate transformation defining $\varphi$ is real, and hence
\[
\varphi\overline\varphi(x,y,z,w)=(-x,-y,z,w).
\]
In $\bP(1,1,2,3)$, weighted multiplication by $-1$ identifies the latter transformation with
\[
(x,y,z,w)\mapsto(x,y,z,-w)=\beta(x,y,z,w).
\]
Therefore
\[
\varphi\overline\varphi=\beta\neq1.
\]
It follows that $S\simeq\overline S$, so its field of moduli relative to $\bC/\bR$ is $\bR$. Every isomorphism $\overline S\to S$ is equal to either $\varphi$ or $\beta\varphi$. Since $\beta$ is central and defined over $\bR$, the composition of either isomorphism with its conjugate is again $\beta$. Thus no isomorphism satisfies Weil's cocycle condition, and $S$ is not defined over $\bR$.
\end{proof}

\printbibliography
\end{document}